\documentclass[ leqno,12pt, twoside]{amsart}
%
%
%


\usepackage{geometry}
\usepackage{amssymb}



\newtheorem{theorem}{Theorem}

\newtheorem{definition}[theorem]{Definition}
\newtheorem{cor}[theorem]{Corollary}
\newtheorem{prop}[theorem]{Proposition}
\newtheorem{lemma}[theorem]{Lemma}
\theoremstyle{remark}
\newtheorem{remark}[theorem]{Remark}

\newcommand{\del}{\partial}
\newcommand{\R}[1]{{{\mathbf R}^{#1}}}
\newcommand{\Rn}{{\mathbf{R}^n} }
\newcommand{\ev}{\mathbf{E}}
\newcommand{\ind}{{\bf 1}}

\newcommand{\ift}{{\mathcal F}^{-1}}
\newcommand{\PM}[1]{P\!M^{#1}}
\newcommand{\esssup}{\mathop{\rm ess\,sup}}
\newcommand{\Fh}{{\mathcal F_h}}
\newcommand{\FhT}{{\mathcal F_{h,T}}}
\newcommand{\bv}{\mathbf{v}}

\begin{document}

\title[Exponent bounds for a convolution inequality]{Exponent bounds for a convolution inequality in Euclidean space with applications to the Navier-Stokes equations}


\author[C.~Orum]{Chris  Orum}
\address{University of Utah,
Department of Mathematics,
155 S 1400 E RM 233,
Salt Lake City, UT 84112-0090}
\email{orum@math.utah.edu}
\thanks{Preprint.}

\author[M.~Ossiander]{Mina Ossiander}
\address{Department of Mathematics,
Oregon State University,
Corvallis, OR 97331-4605}
\email{ossiand@math.oregonstate.edu}

\subjclass[2010]{Primary 35Q30,  76D05,  42B37; Secondary 76M35, 39B72, 60J80}

\date{}


\commby{Walter Craig}

\begin{abstract}
The  convolution inequality $h*h(\xi) \leq B |\xi|^\theta h(\xi)$ defined on $\Rn$ arises from 
a probabilistic representation of solutions of the $n$-dimensional Navier-Stokes equations, 
$n \geq 2$.  Using a chaining argument, we establish the nonexistence of strictly positive 
fully supported  solutions of this inequality if $\theta \geq n/2$, in all dimensions $n \geq 1$.   
We use this result to describe  a  chain of continuous embeddings from spaces 
associated with probabilistic solutions to the spaces $BMO^{-1}$ and 
$BMO_T^{-1}$ associated with the Koch-Tataru solutions of the Navier-Stokes equations.  
\end{abstract}   
\maketitle


\section{Introduction}
Convolution inequalities of the form 
\begin{equation}\label{eq:conv.inequality}
 h*h(\xi) = \int_\Rn h(\xi - \eta) h(\eta) d \eta \leq B |\xi|^\theta h(\xi), 
 \quad \xi \in \Rn, \quad \theta  \geq 0, \quad B>0
\end{equation} 
arise in the analysis of the incompressible Navier-Stokes equations via 
probabilistic representations of solutions.    
Our main theorem shows that if $h : \Rn \rightarrow (0, \infty]$ is a fully supported function satisfying \eqref{eq:conv.inequality}, 
 then the range of the exponent $\theta$ is constrained by the dimension $n$.  
 Letting  $\mathcal H^\theta(\Rn)$ denote the class of solutions of \eqref{eq:conv.inequality} on 
 $\Rn$ we obtain:
\begin{theorem}\label{th:dim.constraint}
If $h \in \mathcal H^\theta(\Rn)$, $n \geq 1$, then $\theta < n/2$. 
\end{theorem}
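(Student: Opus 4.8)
The plan is to argue by contradiction: I assume $h\in\mathcal H^\theta(\Rn)$ with $\theta\ge n/2$ and aim to show that $h$ must equal $+\infty$ on a full-measure subset of some ball about the origin, which is impossible since a genuine element of $\mathcal H^\theta$ is locally integrable, in particular finite a.e. The engine is the observation that \eqref{eq:conv.inequality} says exactly that $h$ is a \emph{supersolution} of the nonlinear relation $g=Sg$, where
\[
 (Sg)(\xi):=\frac{(g*g)(\xi)}{B\,|\xi|^{\theta}},\qquad \xi\neq0,
\]
is monotone ($0\le g_1\le g_2\Rightarrow Sg_1\le Sg_2$) and quadratically homogeneous. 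Since $h\ge Sh$, monotone iteration gives $h\ge Sh\ge S^{2}h\ge\cdots\ge S^{k}h$ for every $k\ge1$, so it suffices to prove that $S^{k}h$ blows up near $0$ when $\theta\ge n/2$. First I would establish a crude positive lower bound near the origin: because $h>0$ everywhere, $(h*h)(\xi)=\int_{\Rn}h(\xi-\eta)h(\eta)\,d\eta>0$ for every $\xi$, and when $h$ is lower semicontinuous Fatou's lemma makes $h*h$ lower semicontinuous too, hence it has a strictly positive minimum $A$ on $\overline{B(0,\rho_0)}$ for each $\rho_0>0$; then \eqref{eq:conv.inequality} yields $h(\xi)\ge (h*h)(\xi)/(B|\xi|^{\theta})\ge (A/B)|\xi|^{-\theta}$ for $0<|\xi|<\rho_0$. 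Setting $g_0:=(A/B)|\cdot|^{-\theta}\mathbf{1}_{B(0,\rho_0)}$ one has $0\le g_0\le h$, hence $g_k:=S^{k}g_0\le S^{k}h\le h$ for all $k$.

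Next I would propagate power-law bounds by induction on $k$: for $0<|\xi|<\rho_0$,
\[
 g_k(\xi)\ \ge\ c_k\,|\xi|^{-\beta_k},\qquad \beta_{k+1}=2\beta_k+\theta-n,\ \ \beta_0=\theta,\qquad c_{k+1}\ge D\,c_k^{2},
\]
with $D=D(n,\theta,B)>0$ fixed. The point is that restricting $(g_k*g_k)(\xi)$ to $\eta\in B(\xi/2,|\xi|/4)$ forces $|\eta|,|\xi-\eta|\in[\,|\xi|/4,\,3|\xi|/4\,]\subset(0,\rho_0)$ whenever $|\xi|<\rho_0$, so the inductive hypothesis applies to both factors and
\[
 (g_k*g_k)(\xi)\ \ge\ c_k^{2}\bigl(3|\xi|/4\bigr)^{-2\beta_k}\,\bigl|B(\xi/2,|\xi|/4)\bigr|\ =\ c_k^{2}\,\omega_n\,4^{-n}(4/3)^{2\beta_k}\,|\xi|^{\,n-2\beta_k},
\]
where $\omega_n=|B(0,1)|$; dividing by $B|\xi|^{\theta}$ gives the claim with $D=\omega_n4^{-n}/B$ (the factor $(4/3)^{2\beta_k}\ge1$ only helps). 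Note that the radius $\rho_0$ does \emph{not} shrink along the induction.

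Then I would conclude by blow-up. Solving the recursion, $\beta_k=(2\theta-n)2^{k}+(n-\theta)$. If $\theta>n/2$ then $2\theta-n>0$, so $\beta_k\to+\infty$; pick the first $k$ with $\beta_k\ge n$. For $0<|\xi|<\tfrac45\rho_0$ and $|\eta|<|\xi|/4$ one still has $|\xi-\eta|<\tfrac54|\xi|<\rho_0$, hence
\[
 (g_k*g_k)(\xi)\ \ge\ c_k^{2}\bigl(5|\xi|/4\bigr)^{-\beta_k}\int_{|\eta|<|\xi|/4}|\eta|^{-\beta_k}\,d\eta\ =\ +\infty ,
\]
since $\int_{|\eta|<r}|\eta|^{-\beta_k}\,d\eta=+\infty$ when $\beta_k\ge n$; thus $g_{k+1}\equiv+\infty$ on $B(0,\tfrac45\rho_0)\setminus\{0\}$, so $h=+\infty$ there, a set of positive measure --- contradiction. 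The endpoint $\theta=n/2$ requires one extra twist, because then $\beta_k\equiv n/2$: here I would first run a \emph{logarithmic} bootstrap, using the same restriction together with $\int_{|\eta|<r}|\eta|^{-n/2}\,d\eta\asymp r^{n/2}$ and $\int_{c|\xi|<|\eta|<\rho_0}|\eta|^{-n}\,d\eta\asymp\log(\rho_0/|\xi|)$, to get $h(\xi)\ge c_k|\xi|^{-n/2}\bigl(\log(\rho_0/|\xi|)\bigr)^{2^{k}}$ on a fixed ball with $c_{k+1}\ge D'c_k^{2}$; taking logarithms, for every $\xi$ with $|\xi|$ small enough (depending on $c_0$ and $D'$) this tends to $+\infty$ with $k$, again forcing $h=+\infty$ near $0$.

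The step I expect to be the main obstacle is the very first one in the merely measurable setting --- extracting \emph{any} strictly positive lower bound for $h$ near the origin --- because the natural mollification fights against the weight $|\xi|^{\theta}$ there and must be performed at a scale far below $\rho_0$ with some care; granting (as is natural in this probabilistic context) that the members of $\mathcal H^\theta$ are lower semicontinuous, this difficulty disappears and what remains is the careful, non-shrinking book-keeping of the constants and exponents in Step~2 and the logarithmic bootstrap covering $\theta=n/2$.
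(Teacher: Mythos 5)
Your argument is correct, and it reaches the conclusion by a different chaining invariant than the paper's. You rightly worry about extracting a positive lower bound for $h$ near $0$ when $h$ is merely measurable, but the lemma preceding Corollary~\ref{cor:1} resolves exactly this with no semicontinuity hypothesis: truncate $h$ to $g=\min(h,1)\,\mathbf 1_{B(0,R)}\in L^2$ so that $g*g$ is continuous and strictly positive on $\overline B(0,R)$, hence bounded below by some $A>0$; the intermediate estimate $h(\xi)\geq|\xi|^{-\theta}g*g(\xi)\geq A|\xi|^{-\theta}$ in that proof \emph{is} your $g_0$ lower bound, with $\rho_0=R$ and $c_0=A$. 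From there the routes diverge. The paper first establishes the dichotomy $\liminf_{\xi\to0}|\xi|^\theta h(\xi)\in\{0,\infty\}$ (Lemma~\ref{lem:0.or.infty}), then rules out $\infty$ by a radius-tracking chain (Lemma~\ref{lem:not.infty}): along a doubly-exponentially growing threshold sequence $x_k$, the radius $\rho(x_k)$ on which $|\xi|^\theta h(\xi)>x_k$ decays only geometrically, so $\int_{|\eta|<\rho(x_k)}h$ stays large enough to force $h*h(\xi_0)=\infty$ at a fixed $\xi_0\neq 0$, a contradiction; the remaining case $\liminf=0$ is eliminated with Fatou plus Corollary~\ref{cor:1}. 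You instead iterate the monotone map $Sg=(g*g)/(B|\cdot|^\theta)$ to escalate the power-law exponent $\beta_k=2^k(2\theta-n)+(n-\theta)$ on a fixed ball until it crosses $n$, whereupon the convolution integral literally diverges and $h\equiv\infty$ on a set of positive measure. This skips both the dichotomy lemma and the Fatou step, and for $\theta>n/2$ is a clean one-shot blow-up; the cost is that at the threshold $\theta=n/2$ the exponent stalls at $n/2$ and you need the separate logarithmic bootstrap you sketch (which does work, though one must track both the shrinking constants $c_k$ and the shrinking range of $|\xi|$ with some care), whereas the paper's radius chain covers all of $[n/2,n)$ with a single uniform final estimate. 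Both proofs dispose of $\theta\geq n$ at once, since $h\gtrsim|\xi|^{-\theta}$ near $0$ already fails local integrability.
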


Our study of this inequality is motivated by an effort to better understand  the structure and limitations 
of the stochastic cascade representation of solutions to the Navier-Stokes equations as first 
introduced by Le Jan and Sznitman \cite{LeJan:Sznitman:97}  and then extended by later authors.  
Essentially, any $h$ satisfying \eqref{eq:conv.inequality}  induces a Banach space $\Fh$ (the initial 
value space) and another space $\FhT = B(0,T; \Fh)$ of bounded $\Fh$-valued functions defined on 
$[0,T]$ (the path space) that supports a Picard iteration scheme for establishing existence and 
uniqueness of solutions of the Navier-Stokes initial value problem.  The function  $h$, which we refer 
to as  a majorizing kernel, must be fully supported to correspond to real-valued solutions. 
Additionally, if the particular $h$ inducing $\Fh$ has $\theta = 1$, then the  solutions so obtained are 
global in time under the restriction that the data are sufficiently small.  If $h$ has $0 
\leq \theta < 1$, then the Picard iteration scheme accommodates arbitrarily large data, but the 
solutions are restricted to be local in time; i.e.\ take values in $\FhT$ where $T$ depends on the size 
of the initial datum in the $\Fh$-norm. In the case $\theta  = 1$ there is a equivalent stochastic 
cascade model providing solutions to the Cauchy problem operative for all $t \geq 0$.

The authors of \cite{FRG:03} give  examples of majorizing kernels and analyze some 
properties of classes of majorizing kernels.  In all cases considered however, the fully supported 
examples with exponent $\theta = 1$ are in dimensions $n \geq 3$. The results presented here 
provide further understanding of this phenomena by demonstrating that  there are no fully supported 
solutions with $\theta = 1$ in $\R2$.    Correspondingly, there is no \emph{direct} analogue of the 
global $3$-dimensional stochastic cascades model in dimension $n = 2$.    On the other hand, the 
Picard iteration method applied with $\FhT$, $0 \leq \theta < 1$ is  sufficient to show existence 
of classes of solutions that are local in time, in any dimension $n \geq 2$. In other words, 
Theorem \ref{th:dim.constraint} imposes no limitations on these local solutions (in any dimension $n 
\geq 2$); it only limits the approach for global solutions in dimension $n=2$.

The organization of this paper is as follows.  
Section \ref{sec:2} reviews the origins and importance of the convolution inequality (\ref
{eq:conv.inequality}).   Section \ref{sec:3} contains a short proof of Theorem \ref{th:dim.constraint}.   
In Section \ref{sec:4} we consider the continuous embeddings of certain $\Fh $ into the pseudomeasure 
spaces $\PM{n-\theta}$,  which Theorem \ref{th:dim.constraint} plays a role in establishing, as well as
successive embeddings into Besov spaces and the spaces $BMO^{-1} $ and $BMO^{-1}_T$ 
associated with the Koch-Tataru solutions.

\section{Background and motivation}\label{sec:2}

Consider the incompressible Navier-Stokes equations  
formulated as a Cauchy problem on all of $\Rn$ where $n \geq 2$. 
This system models the flow of an idealized incompressible viscous fluid issued from an initial velocity field $u_0 = u_0(x)$  at time $t = 0$.   Dimension  $n = 3$ is of central
importance, but the formulation is of interest in arbitrary dimension $n \geq 2$. The unknowns 
are the velocity vector $ u = u(x,t) = (u_i(x,t))_{i=1}^n$ and scalar pressure $p = p(x,t)$, where $x = (x_1, \dots, x_n)$.  
The system consists of $n+1$ 
coupled nonlinear equations  
\begin{subequations}\label{eq:NS}
\begin{align}\label{eq:NS1}
&\frac{\del u_i }{\del t}(x,t)  + \sum_{j= 1}^n u_j(x,t) \frac{\del u_i}{\del x_j}(x,t)
 =  \nu \sum_{j= 1}^n \frac{\del^2 u_i }{\del x_j^2}(x,t)  - \frac{\del p }{\del x_i}(x,t)  + g_i(x,t), \\[.6ex] \label{eq:NS2}
  &\sum_{j= 1}^n  \frac{\del u_j }{\del x_j}(x,t)  = 0,
  \end{align}
\end{subequations}
supplemented by the initial condition $\lim_{t \rightarrow 0} u(x,t) = u_0(x)$.    Here  $\nu $ denotes the 
kinematic viscosity and $g(x,t) = (g_i(x,t))_{i=1}^n$  is an external forcing term.  For simplicity we 
may assume that $\nabla \cdot u_0(x) = 0$ and  $\nabla \cdot g(x,t) =  0 $ for all $t
$.  

In 1997 Le Jan and Sznitman \cite{LeJan:Sznitman:97} introduced a representation of the solutions 
of a Fourier space  integral formulation of  (\ref{eq:NS}) in three spatial dimensions as a multiplicative 
functional defined on a continuous-time branching process.  These Fourier transformed Navier-
Stokes equations (\emph{FNS}) may be written 
\begin{align}\label{eq:FNS} 
 \nonumber
 \hat u(\xi,t ) ={}& e^{-\nu |\xi|^2 t} \hat u_0(\xi) + \int_0^t e^{-\nu |\xi|^2 (t-s) }   \hat g(\xi, s) ds  \\ 
 &+(2 \pi)^{-n/2} \int_0^t
|\xi| e^{-\nu |\xi|^2 (t-s) } \int_\Rn [ -\mathrm{i} \frac{\xi}{|\xi|} \cdot \hat u(\eta, s) ]   \hat {\mathbf P} (\xi) \hat u(\xi-\eta, s) d\eta ds,
\end{align}
where $\xi = (\xi_1, \dots , \xi_n)$ is the Fourier space variable, $\hat u(\xi,t)$ denotes the spatial 
Fourier transform of the unknown velocity field (and similarly for $\hat g(\xi, t)$ and $\hat u_0(\xi)$), 
and  
$\hat{ \mathbf P }(\xi)$ denotes the Leray-Helmholtz projection whose pointwise action in Fourier 
space is to project a vector $z \in \mathbf{C}^n$ onto the subspace orthogonal to $\xi \neq 0$:  
$$ \hat{\mathbf P}(\xi) z = z - (\mathbf e_\xi \cdot z) \mathbf e_\xi; \quad \mathbf e_\xi = \frac{\xi}{|
\xi|}. $$
A key device in the $\R3$  representation in \cite{LeJan:Sznitman:97} is the dimension specific rescaling of $\hat u$  and $\hat{g}$, 
\begin{equation*}
 \chi(\xi,t) = \frac{2}{\nu} \Big(\frac{\pi}{2}\Big)^{3/2} |\xi|^2 \hat u(\xi,t), \quad  \varphi(\xi, t) = \frac{4}{\nu^2} \Big(\frac{\pi}{2}\Big)^{3/2} \hat g(\xi,t),
 \end{equation*}
which allows the simultaneous description of a `splitting distribution' for a pair of particles $\{\Xi_1, \Xi_2\}$ replacing $\xi$ in the branching process, namely 
 \begin{equation}\label{eq:splitting} 
 \mathbf{Pr}(\Xi_i \in A) = \frac{1}{\pi^3} \int_A \frac{|\xi|}{|\xi - \eta|^2 |\eta |^2} d\eta, \quad A \subseteq \R3, \quad i = 1,2, \quad  \Xi_1 + \Xi_2 = \xi, 
\end{equation}
 and the normalization of $|\xi| \exp\{-\nu |\xi|^2 s\}$ to the  density of an exponential random variable describing the random lifetime of the particle of type $\xi$ so replaced.   

Details of this construction may be found in \cite{LeJan:Sznitman:97}, \cite{FRG:03}; extensions may be found in \cite{FRG:05}, \cite{Bloemker:Romito:Tribe:06}, \cite{FRG:03(2)}, \cite{FRG:08}, \cite{Morandin:05}, \cite{Ossiander:05}. Related analytical papers are  \cite{Bakhtin:Dinaburg:Sinai:04}, \cite{Gubinelli:06}, \cite{Sinai:05}, \cite{Sinai(chapter):05}, \cite{Sinai(JSP):05}.
Essentially the solution is represented in the form of an expected value
\begin{equation}\label{eq:ev.rep}
\hat u(\xi,t) = h(\xi) \mathbf{E} \mathsf X(\xi,t)
\end{equation}
where $h(\xi) = \pi^{-3}|\xi|^{-2}$  solves the convolution equation $h*h(\xi) = |\xi|h(\xi)$ on $\R3$ and  
${\mathsf X}(\xi,t)$ is defined by a backward recursion arising from a probabilistic interpretation of  
the rescaled formulation of \eqref{eq:FNS}.  Figure \ref{fig:branching} illustrates the branching 
process and the construction of the multiplicative functional $\mathsf X(\xi,t)$.  
The $\otimes_\xi$-operation performed at each of the binary nodes in the branching process encodes the algebraic 
structure of the bilinear term on the right hand side of \eqref{eq:FNS}: for two vectors $z, w \in 
\mathbf C^3$ we define $z \otimes_\xi w \in \mathbf C^3$ by $z \otimes_\xi w = - \mathrm i [z \cdot 
\mathbf  e_\xi ]  \hat{\mathbf P}(\xi) w.$
\setcounter{figure}{0}
\setlength{\unitlength}{.74mm}
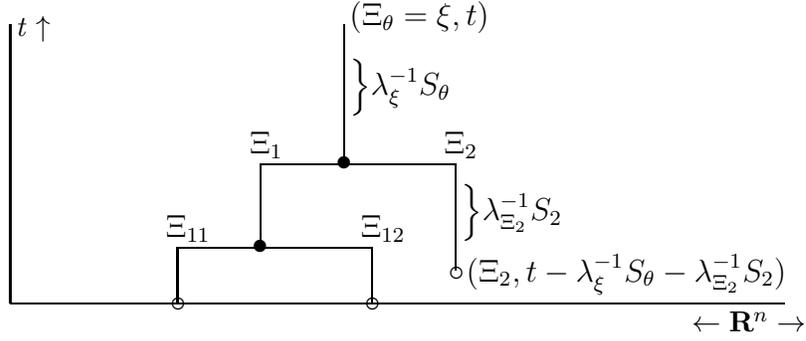
\begin{figure}[t] 
\begin{picture}(150, 70) 
\put(0,5){\line(0,1){50}}   
\put(1,53){\small \emph{t $\uparrow$} }
\put(0,5){\line(1,0){139}}  
\put(122.5,0){\small $\leftarrow \Rn \rightarrow$ }
\put(60,55){\line(0,-1){25}} 
\put(61,42.5){$\Big\} \lambda_{\xi}^{-1} S_\theta$}   
\put(61,55){$(\Xi_\theta = \xi,t)$}
\put(58.5,28.9){$\bullet$}       
\put(45,30){\line(1,0){35}}  
\put(45,30){\line(0,-1){15}} 
\put(30,15){\line(1,0){35}}  
\put(30,15){\line(0,-1){10}} 
\put(28.7,3.6){$\circ$}
\put(63.7,3.6){$\circ$}          
\put(65,15){\line(0,-1){10}}
\put(43.5,13.9){$\bullet$}       
\put(80,30){\line(0,-1){19}} 
\put(81,20){$\Big\} \lambda_{\Xi_2}^{-1} S_2$} 
\put(78.7,9.1){$\circ$}       
\put(82,9){$(\Xi_2, t - \lambda_\xi^{-1} S_\theta  -\lambda_{\Xi_2}^{-1} S_2)$}
\put(43,32){$\Xi_1$}   
\put(78,32){$\Xi_2$}
\put(28,17){$\Xi_{11}$}      
\put(63,17){$\Xi_{12}$}
\end{picture}
\caption{\label{fig:branching} A schematic illustration of the branching process and 
the construction of 
$\mathsf X(\xi,t)$:  a particle of type $\xi = \Xi_\theta$ lives for a random length of time $\lambda^{-1}_\xi S_\theta$ and then dies out.  Depending on the outcome of a Bernoulli random variable with mean $1/2$, it is either not 
replaced at all or replaced by two correlated particles $\Xi_1$ and $\Xi_2$ distributed as $\eqref{eq:splitting}$, or more generally
\eqref{eq:splitting.gen}. 
The two new particles in turn live for independent random lifetimes, and so the 
process continues. There are two types of nodes: input nodes ($\circ$) accept data when a 
particle dies out without replacement or when its lifetime extends below the horizontal axis at time $t = 0$.  
Operational nodes ($\bullet$) combine data according to  $z, w \mapsto m(\Xi_\bv) z \otimes_{\Xi_
\bv} w$ and send the output upward.  Here $m(\xi)$ is a multiplicative factor that arises from the 
rescaling  of $\hat u$ by $h$,  $\lambda_\xi = \nu |\xi|^2$, $\bv \in \{\theta, 1, 2, 11, \dots \}$, and $S_\theta, S_1, S_2, \dots$ are \emph{i.i.d.}\ standard exponential random variables. }
\end{figure}

This representation provides existence and uniqueness results for the solutions of 
\eqref{eq:FNS} in the space of pseudomeasures  $(P\!M^2)^3$.   The scale of pseudomeasure spaces is defined by
\begin{equation}
 P\!M^a = \big\{ f \in \mathcal S^\prime(\Rn): \hat f \in L^1_{\text{loc}}(\Rn), \,
 \| f ; {P\!M^a}\| = \esssup_{\xi \in \Rn} |\xi |^a |\hat f(\xi) | < \infty \big\}
\end{equation}
where $a \geq 0$ is a given parameter and  
$\mathcal S^\prime(\Rn)$ denotes the space temperate distributions  on $\Rn$.  Alternatively, the spaces $\PM{a}$ may be regarded as homogeneous Besov-type spaces based on the classical space of pseudomeasures $\PM{} = \PM{0}$:
$$P\!M^a = \dot{B}^{a,\infty}_{\PM{}} = \{ f \in \mathcal S^\prime(\Rn) : \textstyle   \sup_{j \in \mathbf{Z}} 2^{aj} \| {\Delta_j f} \|_{\PM{}} < \infty \}.$$ 
Here $\Delta_j f$ is the \emph{j}th dyadic block of the Littlewood-Paley decomposition of $f$.

Returning to the stochastic cascade, it was later recognized  \cite{Cannone:00} that the same existence and uniqueness results  could be 
obtained by applying the Picard iteration argument  with the Banach space $B(0,T; (P\!M^2)^3 )$ 
of bounded functions $f(t) :[0,T] \rightarrow (P\!M^2)^3$.    
  This argument is notable for the continuity of 
\begin{equation} \label{eq:bilinear}
\mathbf B = \mathbf B(u, v)(x,t) =\int_0^t e^{\nu (t - s) \Delta} \mathbf{P} \nabla \cdot ( u \otimes  v) (s)  ds. 
\end{equation}
That is,  
$\mathbf B : E \times E \rightarrow E $ is continuous if $E =   B(0,T; (P\!M^2)^3 )$  but it is not continuous in general, in which case the Picard iteration argument typically requires the use of an embedded subspace with a second norm, 
see e.g.\ \cite[p.~220]{Bahouri:Chemin:Danchin:11}, \cite{Cannone:04}, \cite{Lemarie-Rieusset:02}, \cite{Meyer:06}.   

The authors of \cite{FRG:03} generalize this approach 
by showing that  the $h$ in \eqref{eq:ev.rep}  may belong to a more general class of  \emph{FNS majorizing kernels} which are positive solutions  of 
\eqref{eq:conv.inequality} parameterized by the exponent $\theta$.   
There are two natural Banach spaces associated with a given majorizing kernel.  The first is the majorization  space  
\begin{equation}\nonumber
\Fh = \big\{ f \in \mathcal S^\prime(\Rn)^n : 
        \hat f(\xi) \in L^1_{\text{loc}}(\Rn)^n, 
        \| f ; \Fh \| = \sup_{\xi \in \Rn} [h(\xi)]^{-1} | \hat f(\xi)| < \infty \big\}.
\end{equation}
The initial data for the Cauchy problem belongs to this space.
The second is the path space which contains the solutions:   $\FhT = B(0, T; \Fh)$  denotes the bounded $\Fh$-valued functions defined on the interval $[0,T]$ with norm 
$$\| f(t) ; \FhT \| = \sup_{0 \leq t \leq T } \| f(t) ; \Fh \|. $$ 
It turns out that 
$\mathbf B: \FhT \times \FhT \rightarrow \FhT $ is  again continuous for any majorizing kernel of 
exponent $0 \leq \theta \leq 1$, and the  Picard iteration argument is directly applicable without the introduction of a second norm.  This 
yields global 
existence and uniqueness results in the case $\theta = 1$ (with small data) and local existence and 
uniqueness results in the case $0 \leq \theta < 1$ (with arbitrarily large data).  The argument 
works for the \emph{FNS} equations formulated in any dimensions $n \geq 2$ subject to the 
constraint  $\theta < n/2$   of Theorem \ref{th:dim.constraint}.

For majorizing kernels with exponent $\theta =1$, the Picard iteration scheme can be connected to 
stochastic cascades 
as follows.   A  solution $h$ of \eqref{eq:conv.inequality} 
provides the following splitting distribution for a branching process generalizing \eqref{eq:splitting}:
\begin{equation}\label{eq:splitting.gen}
\mathbf {P r}(\Xi_1 \in A)  = \int_A \frac{h(\xi - \eta ) h(\eta)} {h*h(\xi)} d\eta,  \quad A \subseteq \Rn, \quad \Xi_1 + \Xi_2 = \xi.      
\end{equation} 
This specializes to \eqref{eq:splitting} 
in dimension $n = 3$ with  $h(\xi) = \pi^{-3}|\xi|^{-2}$.  For the more general branching processes and $\mathsf X(\xi,t)$ defined accordingly, one can define a sequence of events $\{G_k\}_{k\geq 0}$ pertaining to the branching 
process so that the sequence $\ev ( \mathsf X(\xi,t); G_k) $ and the iterates of the Picard contraction 
argument are in one-to-one correspondence.  This is established in \cite{FRG:03} with the conclusion 
that for global solutions with $\theta = 1$, the existence of the expected value representation and the 
convergence of the Picard iteration 
scheme are essentially equivalent.    

\section{Main theorem}\label{sec:3} 
The majorizing kernels considered in \cite{FRG:03} are allowed to be supported on  various convex additive semigroups $W \subset \Rn$.   Here however we focus entirely 
on fully supported majorizing kernels:  those $h$ 
 with  $  \int_A h(\xi) d\xi > 0$ 
 for all subsets $A \subseteq \Rn$ having positive Lebesgue measure.  Fully supported majorizing kernels correspond to majorization spaces $\Fh$ and $\FhT$ that contain real data and  solutions.

\begin{definition} \label{def:maj.ker} A majorizing kernel with exponent $\theta$ is a tempered function (a tempered distribution that is also a function)  $h : \Rn \rightarrow (0, \infty]$ satisfying the following conditions:
\begin{enumerate}
\item[(i)] $h * h (\xi) \leq B |\xi|^\theta h(\xi)$ for all $\xi \in \Rn$ with constants $B > 0$ and $\theta \geq 0$;  
\item[(ii)]  $h^{-1}(\infty)$ has $n$-dimensional Lebesgue measure zero. 
\end{enumerate}
 We set $\Omega_h = \Rn \setminus h^{-1}( \infty )$.
  If $B$ is such that
 $\sup_\xi {h*h(\xi)}/{|\xi|^\theta h(\xi)} = B$,
 then $B$ is  {sharp}.  If $B=1$ is sharp  then  $h$ is {standardized}.  
\end{definition}
The set of majorizing kernels of exponent $\theta$ defined on $\Rn$ is denoted $\mathcal H^{\theta}(\Rn)$. 
It is possible for a given majorizing kernel to have a range of exponents. 

The invertible map $h \mapsto B^{-1}h$ on $\mathcal H^\theta(\Rn)$ has the effect of standardizing 
any non-standardized majorizing kernel with sharp constant $B$; hence for the purpose of proving 
Theorem \ref{th:dim.constraint}  we assume, without loss of generality,  that $h$ is standardized.  We 
also make this assumption in the proofs of the  lemmas in this section.  

\begin{lemma}
Let $h \in \mathcal H^\theta(\Rn)$, $\theta \geq 0$. Then for all $R >0$,  $h(\xi)$ is bounded away from zero on the closed ball $\overline{B}(0, R) = \{ \xi \in \Rn : |\xi| < R\}$.  
\end{lemma}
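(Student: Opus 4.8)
The plan is to turn the convolution inequality into a genuine pointwise lower bound by comparing $h*h$ with a continuous, strictly positive convolution. Since (as noted above) we may take $h$ standardized, $h*h(\xi)\le |\xi|^\theta h(\xi)$ for every $\xi\ne 0$, whence
\[
 h(\xi)\ \ge\ |\xi|^{-\theta}\,h*h(\xi)\ =\ |\xi|^{-\theta}\int_\Rn h(\xi-\eta)h(\eta)\,d\eta,\qquad \xi\ne 0,
\]
the inequality being trivially valid wherever a side is infinite. So it suffices to bound $h*h$ from below, uniformly on $\overline{B}(0,R)$.

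First I would isolate a ``fat'' piece of $h$. Because $h$ takes values in $(0,\infty]$ and $\Rn\setminus\Omega_h=h^{-1}(\infty)$ is Lebesgue null, $h$ is strictly positive and finite almost everywhere; hence for $\delta>0$ small enough the set $K:=\{\xi\in B(0,1):h(\xi)\ge\delta\}$ is bounded and has positive Lebesgue measure. Discarding the part of the integral defining $h*h$ with $\eta\notin K$ gives, for every $\xi\in\Rn$,
\[
 h*h(\xi)\ \ge\ \int_K h(\xi-\eta)h(\eta)\,d\eta\ \ge\ \delta\int_K h(\xi-\eta)\,d\eta\ =\ \delta\,(h*\ind_K)(\xi).
\]

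The crux is that $g:=h*\ind_K$ is finite, continuous, and strictly positive on all of $\Rn$. It is finite and continuous because $h\in L^1_{\mathrm{loc}}(\Rn)$ (being a tempered function) while $\ind_K\in L^\infty(\Rn)$ has bounded support: on any compact set $Q$, $g$ coincides with $(h\,\ind_{Q-K})*\ind_K$, a convolution of an $L^1$ function with a bounded function, hence (uniformly) continuous by continuity of translation in $L^1$. It is strictly positive because, by the change of variables $\zeta=\xi-\eta$, $g(\xi)=\int_{\xi-K}h>0$, the set $\xi-K$ having positive measure and $h$ being positive a.e. Therefore $g$ attains a minimum $c_0:=\min_{\overline{B}(0,R)}g>0$ on the compact ball $\overline{B}(0,R)$, and for $0<|\xi|\le R$,
\[
 h(\xi)\ \ge\ |\xi|^{-\theta}\,\delta\,g(\xi)\ \ge\ R^{-\theta}\,\delta\,c_0\ >\ 0 .
\]
Since also $h(0)>0$ by hypothesis (and when $\theta=0$ the previous display already includes $\xi=0$), we conclude $\inf_{\overline{B}(0,R)}h\ \ge\ \min\{R^{-\theta}\delta c_0,\ h(0)\}>0$, as claimed.

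I do not expect a real obstacle here; the only points needing care are the bookkeeping for the possibly-infinite values of $h$ (which can only help the lower bound), the separate treatment of the origin when $\theta>0$, and quoting the correct continuity statement for a convolution $L^1*L^\infty$ with compactly supported second factor.
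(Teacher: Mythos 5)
Your proof is correct and follows essentially the same route as the paper: standardize $h$, invert the convolution inequality to $h(\xi)\ge |\xi|^{-\theta}\,h*h(\xi)$, and bound $h*h$ from below by a continuous, strictly positive convolution whose minimum over the compact ball $\overline{B}(0,R)$ is therefore positive. The only difference is in how the continuous minorant is built --- the paper truncates $h$ to a bounded, compactly supported function $g=\min\{h,1\}\ind_{B(0,R)}$ and uses the continuity of $g*g$ via $L^2*L^2$, whereas you keep $h$ and convolve against the indicator of a level set, obtaining continuity by localizing to an $L^1*L^\infty$ convolution --- and your explicit handling of $\xi=0$ (where $|\xi|^{-\theta}$ is singular) is a small but welcome bit of extra care that the paper glosses over.
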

\begin{proof} We assume that $h$ is standardized.  Fix $R > 0$, and define  $g(\xi) : \Rn \rightarrow [0, 1]$ by restricting and truncating $h(\xi)$:    
\begin{equation}\nonumber
g(\xi) = 
\begin{cases}
0  & \text{ if } \xi \notin B(0, R), \\
\min \{ h(\xi), 1 \} & \text{ if } \xi \in B(0,R).    
\end{cases}
\end{equation} 
Since $h(\xi) \geq g(\xi) \geq 0 $ for all $\xi \in \Rn$,  we have  $h*h(\xi) \geq g*g(\xi) \geq 0$ as well.  Let $\xi_0 \in \overline{B}(0, R)$, the closed ball.  
Since $0< g(\xi)  \leq 1$ on $B(\xi_0/2, R/2) \subset B(0, R)$,  it follows that 
\begin{equation}\nonumber
g*g(\xi_0) \geq  \int_{|\eta | < R/2} g\big(\frac{\xi_0}{2} - \eta\big) g\big(\frac{\xi_0}{2} + \eta\big) d\eta > 0.
\end{equation}    In other words, for  $J = g*g (\overline B(0,R))$ we have $J \subset (0,\infty)$.  But since $g \in L^2(\Rn)$ it follows that $g*g(\xi)$ is continuous on $\Rn$, hence $J$ is compact and connected, i.e., $J$ is  a closed subinterval of $ (0, \infty)$ that is necessarily bounded away from zero.   Then  on $\overline B(0, R)$ we have 
 $$h(\xi) \geq  |\xi|^{-\theta} h * h(\xi) \geq  |\xi|^{-\theta} g * g(\xi) \geq   R^{-\theta} g*g(\xi), $$  
giving that $h(\xi)$ is also  bounded away from zero on $\overline B(0,R)$.  
\end{proof}
\begin{cor}\label{cor:1}
If $h \in \mathcal H^\theta(\Rn)$, $\theta \geq 0$ then for all $\xi_0 \in \Omega_h$, $\xi_0 \neq 0$ there exist  $\delta = \delta(\xi_0) > 0$ and $\varepsilon = \varepsilon(\xi_0) > 0$     such that $\inf_{| \eta | < \delta} h(\xi_0 - \eta) \geq \varepsilon h(\xi_0)$.
\end{cor}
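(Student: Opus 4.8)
The plan is to obtain the Corollary as an immediate consequence of the preceding Lemma. The point is that the desired conclusion is a \emph{multiplicative} lower bound for $h$ on a small ball around $\xi_0$; but because $\xi_0\in\Omega_h$ we have $h(\xi_0)<\infty$, so it is enough to produce any fixed positive number that bounds $h$ from below on a whole neighborhood of $\xi_0$, and then to divide by the finite quantity $h(\xi_0)$.

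Concretely, I would proceed as follows. Fix any radius for the neighborhood — say $\delta=1$, though any $\delta>0$ will do — and set $R=|\xi_0|+\delta$. Whenever $|\eta|<\delta$ we have $|\xi_0-\eta|\le|\xi_0|+|\eta|<R$, so $\xi_0-\eta\in\overline B(0,R)$. The Lemma supplies a constant $c_R>0$ with $h(\xi)\ge c_R$ for every $\xi\in\overline B(0,R)$, and hence $\inf_{|\eta|<\delta}h(\xi_0-\eta)\ge c_R$. Since $0<h(\xi_0)<\infty$, the choice $\varepsilon=\varepsilon(\xi_0)=c_R/h(\xi_0)>0$ yields $\inf_{|\eta|<\delta}h(\xi_0-\eta)\ge\varepsilon\,h(\xi_0)$, which is exactly the claim.

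I do not expect any genuine obstacle: all the analytic work — truncating $h$ to $g\in L^2(\Rn)$, using continuity of $g*g$, and thereby getting $h$ bounded away from zero on balls — is already done in the Lemma. The only thing to watch is the finiteness of $h(\xi_0)$, which is precisely the hypothesis $\xi_0\in\Omega_h=\Rn\setminus h^{-1}(\infty)$. If one instead wanted a proof that does not invoke the Lemma, one could bound $h(\xi)\ge|\xi|^{-\theta}\,g*g(\xi)$ on a small ball around $\xi_0$ and appeal to $g*g(\xi_0)>0$ together with the continuity of $g*g$; it is in this alternative that the restriction $\xi_0\ne 0$ would genuinely be used, namely to keep $|\xi|^{-\theta}$ bounded near $\xi_0$.
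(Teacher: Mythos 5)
Your proof is correct and is exactly the intended argument: the paper states this as an immediate corollary of the preceding lemma and gives no separate proof, and your derivation (take $R = |\xi_0| + \delta$, use the lemma's uniform lower bound $c_R$ on $\overline B(0,R)$, and set $\varepsilon = c_R / h(\xi_0)$, which is positive and well-defined because $\xi_0 \in \Omega_h$ makes $h(\xi_0)$ finite) is the natural way to extract it. Your closing remark about where $\xi_0 \neq 0$ would be needed is a fair observation; with the lemma already in hand that hypothesis is not actually used in this deduction.
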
 
In the following two lemmas $B^*(r)= \{ \xi \in \Rn : 0 < |\xi| < r\}$ denotes the punctured ball of radius $r>0$ in $\Rn$ centered at the origin. 
\begin{lemma} 
\label{lem:0.or.infty}
Suppose $h(\xi) \in \mathcal H^\theta(\Rn)$ with  $ \theta \geq n/2$.  Then $h(\xi)$ has the following behavior at the origin:  either  
$\liminf_{\xi \rightarrow 0 } |\xi|^\theta h(\xi) = 0$ or 
$ \liminf_{\xi \rightarrow 0 } |\xi|^ \theta h(\xi)  = \infty$.
\end{lemma}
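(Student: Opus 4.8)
The plan is to argue by contradiction: suppose the $\liminf$ at the origin is some finite positive number $\ell \in (0,\infty)$, and derive a lower bound on $h*h$ near the origin that violates the convolution inequality because $\theta \geq n/2$ makes the relevant integral diverge (or grow too fast). The heuristic is that if $h(\xi)$ behaves like $\ell|\xi|^{-\theta}$ near $0$ in a rough two-sided sense, then $h*h(\xi)$ picks up a contribution of order $\int |\xi-\eta|^{-\theta}|\eta|^{-\theta}\,d\eta$ over a neighborhood of the origin, and this integral is infinite precisely when $2\theta \geq n$; borderline $2\theta = n$ gives a logarithmic blow-up of the truncated integral. Either way the ratio $h*h(\xi)/(|\xi|^\theta h(\xi))$ cannot stay bounded by $B$, contradicting (i) in Definition \ref{def:maj.ker}.

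The key steps, in order, are as follows. First, negate the conclusion: assume $0 < \ell := \liminf_{\xi\to 0}|\xi|^\theta h(\xi) < \infty$. Then there is a radius $r_0 > 0$ and a constant $c > 0$ with $h(\xi) \geq c|\xi|^{-\theta}$ for all $\xi \in B^*(r_0)$ — this uses only the $\liminf$ being positive, giving a lower bound near the origin. Second, fix a target point $\xi$ with $|\xi|$ small and estimate
\begin{equation}\nonumber
h*h(\xi) \;\geq\; \int_{|\eta| < r_0,\ |\xi - \eta| < r_0} h(\xi-\eta)\, h(\eta)\, d\eta \;\geq\; c^2 \int_{A_\xi} |\xi - \eta|^{-\theta} |\eta|^{-\theta}\, d\eta,
\end{equation}
where $A_\xi$ is a suitable subregion of $B^*(r_0)$, for instance $\{\eta : \tfrac{1}{4}|\xi| < |\eta| < \tfrac{1}{2}r_0\}$ on which $|\xi - \eta|$ is comparable to $|\eta|$. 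Third, evaluate this integral in polar coordinates: it is comparable to $\int_{|\xi|/4}^{r_0/2} \rho^{n-1-2\theta}\, d\rho$, which diverges as $|\xi| \to 0$ when $2\theta > n$ (rate $|\xi|^{n-2\theta}$) and is $\sim \log(1/|\xi|)$ when $2\theta = n$. Fourth, compare with the right-hand side: by the inequality we would need $h*h(\xi) \leq B|\xi|^\theta h(\xi)$, but using $h(\xi) \leq (\ell+1)|\xi|^{-\theta}$ along a sequence $\xi \to 0$ realizing the $\liminf$ (this is where finiteness of $\ell$ enters — we need an \emph{upper} bound on $h(\xi)$, available only along such a sequence), the right-hand side is $\leq B(\ell+1)$, a fixed constant, while the left-hand side $\to \infty$ along that same sequence. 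Contradiction.

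The main obstacle is the asymmetry between the lower bound (which we get everywhere near $0$ from $\liminf > 0$) and the upper bound (which we only get along a subsequence from $\liminf < \infty$); the argument must be organized so that both the divergent lower estimate of $h*h(\xi)$ and the bounded upper estimate of $|\xi|^\theta h(\xi)$ are evaluated along the \emph{same} sequence $\xi_k \to 0$ with $|\xi_k|^\theta h(\xi_k) \to \ell$. This is fine because the lower bound $h(\xi) \geq c|\xi|^{-\theta}$ holds at \emph{all} points of $B^*(r_0)$, so the integral lower bound for $h*h(\xi_k)$ is valid for each $k$; only the final comparison needs the subsequence. A secondary point to handle carefully is that $h$ may take the value $+\infty$ on a measure-zero set $h^{-1}(\infty)$, but this is harmless: it only increases $h*h$, and the lower bound $h \geq c|\xi|^{-\theta}$ on $B^*(r_0)\setminus h^{-1}(\infty)$ suffices since the excluded set has measure zero. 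One should also verify that the region $A_\xi$ genuinely lies inside $\{|\eta| < r_0\} \cap \{|\xi-\eta| < r_0\}$ for $|\xi|$ small enough, which is immediate from the triangle inequality once $|\xi| < r_0/2$, say. The case $2\theta = n$ with its logarithmic divergence deserves an explicit sentence, but it is handled identically.
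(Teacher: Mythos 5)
Your proof is correct and rests on the same core estimate as the paper's: use $\liminf > 0$ to get a lower bound $h(\xi)\geq c|\xi|^{-\theta}$ near the origin, then lower-bound $h*h$ near $0$ by the divergent integral $\int |\xi-\eta|^{-\theta}|\eta|^{-\theta}\,d\eta$, which forces a contradiction with the convolution inequality when $2\theta\geq n$. The paper reaches the same conclusion a bit more efficiently by applying Fatou's lemma directly to $|\xi|^\theta h(\xi)\geq \int_{|\eta|<\delta/2} L^2|\eta|^{-\theta}|\xi-\eta|^{-\theta}\,d\eta$, which sidesteps both your subsequence extraction (the upper bound $h(\xi_k)\leq(\ell+1)|\xi_k|^{-\theta}$ along a realizing sequence) and the need to carve out and estimate the auxiliary region $A_\xi$, and which handles $2\theta>n$ and $2\theta=n$ in a single stroke since $\int_{|\eta|<\delta/2}|\eta|^{-2\theta}\,d\eta=\infty$ either way.
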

\begin{proof}
Assume $h$ is standardized.  Suppose that $\liminf_{\xi \rightarrow 0} |\xi|^\theta h(\xi) > 0$.  Then for some $L > 0$ there exists a $\delta > 0$ such that $|\xi|^\theta h(\xi) > L$ for all $\xi \in B^*(\delta)$.   Then for $\xi \in B^*(\delta)$,
\begin{align}
\nonumber
 |\xi|^\theta h(\xi) \geq   
 \int\limits_{|\eta | < \delta/2} 
 h(\eta) h(\xi - \eta ) d\eta   \geq  
  \int\limits_{|\eta | < \delta/2}
 \frac{L^2 d\eta }
        {|\eta |^\theta |\xi - \eta  |^\theta}.
\end{align}
Applying Fatou's Lemma with any convergent sequence $\xi_n \rightarrow 0$  gives
\begin{align} \nonumber \label{eq:liminf}
 \liminf_{\xi_n \rightarrow 0} |\xi_n|^\theta h(\xi_n) \geq  
       \int\limits_{|\eta_1| < \delta/2}    \liminf_{\xi_n \rightarrow 0}\frac{L^2 d\eta } {|\eta |^\theta |\xi_n - \eta  |^\theta} =   
 \int\limits_{|\eta| < \delta/2} 
\frac{L^2 d\eta }{|\eta |^{2\theta} }    
= \infty.
\end{align}  
\end{proof}
\begin{lemma}
\label{lem:not.infty}
If $h  \in \mathcal H^\theta(\Rn)$ and  $\theta \geq n/2 $ then $ \liminf_{\xi \rightarrow 0 } |\xi|^ \theta h(\xi)  \neq \infty$.  
\end{lemma}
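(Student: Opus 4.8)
The plan is to argue by contradiction. Assuming $\liminf_{\xi\to 0}|\xi|^\theta h(\xi)=\infty$, for every $M>0$ there is a $\delta>0$ with $h(\xi)\ge M|\xi|^{-\theta}$ on the punctured ball $B^*(\delta)$, and I would bootstrap this lower bound with the convolution inequality (taking $h$ standardized, so that $h(\xi)\ge|\xi|^{-\theta}h*h(\xi)$ for $\xi\neq0$) until it collides with the fact that $h$, being a tempered function, lies in $L^1_{\mathrm{loc}}(\Rn)$, so $\int_{|\xi|<r}h(\xi)\,d\xi<\infty$ for every $r>0$. The mechanism is the elementary estimate: if $0\le\alpha<n$, then restricting to $\eta$ near $\xi$ gives, for $0<|\xi|<r/3$,
\[
\int_{|\eta|<r/2}\frac{d\eta}{|\eta|^\alpha|\xi-\eta|^\alpha}
\ \ge\ \Bigl(\tfrac{3|\xi|}{2}\Bigr)^{-\alpha}\int_{|u|<|\xi|/2}\frac{du}{|u|^\alpha}
\ =\ c(n,\alpha)\,|\xi|^{\,n-2\alpha},
\]
whereas for $\alpha\ge n$ the same integral is $+\infty$; it is this dichotomy in $\alpha$ versus $n$ that the whole argument exploits.

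Next I would establish the bootstrap step: if $h(\xi)\ge A|\xi|^{-\alpha}$ on $B^*(r)$ with $0\le\alpha<n$, then, since $|\xi|,|\eta|<r/2$ force $|\xi-\eta|<r$, we get for $0<|\xi|<r/2$ that $h(\xi)\ge A^2|\xi|^{-\theta}\int_{|\eta|<r/2}|\eta|^{-\alpha}|\xi-\eta|^{-\alpha}\,d\eta$, and the estimate above upgrades this, on $B^*(r/3)$, to
\[
h(\xi)\ \ge\ c(n,\alpha)\,A^2\,|\xi|^{-(\theta+2\alpha-n)}.
\]
Iterating from $(A_0,\alpha_0,r_0)=(M,\theta,\delta)$ produces $\alpha_{k+1}=\theta+2\alpha_k-n$, i.e.\ $\alpha_k=2^k(2\theta-n)+(n-\theta)$, together with $A_{k+1}=c(n,\alpha_k)A_k^2$ and $r_{k+1}=r_k/3$; this is legitimate as long as the exponents stay below $n$, and as soon as some $\alpha_k\ge n$ (or if $\theta\ge n$ already) the bound $h(\xi)\ge A_k|\xi|^{-\alpha_k}$ on $B^*(r_k)$ forces $\int_{|\xi|<r_k}h=\infty$, contradicting $h\in L^1_{\mathrm{loc}}$.

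When $\theta>n/2$ the exponents $\alpha_k=2^k(2\theta-n)+(n-\theta)$ increase without bound, so some $\alpha_k\ge n$ and we are done by the previous paragraph. The delicate case is $\theta=n/2$, where $\alpha_k\equiv n/2<n$ and the exponent never improves; here one must track the constants instead. Since then $n-2\alpha_k=0$, the constant $c_0:=c(n,n/2)$ is independent of $k$ and the recursion reads $A_{k+1}=c_0A_k^2$; choosing $M$ so large that $c_0M>1$ gives $c_0A_k=(c_0M)^{2^k}$, a doubly exponential blow-up, whereas $r_k=3^{-k}\delta$ decays only geometrically, so $A_kr_k^{\,n/2}\to\infty$. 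On the annulus $\{r_{k+1}\le|\xi|<r_k\}$ one has $h(\xi)\ge A_k|\xi|^{-n/2}\ge A_kr_k^{-n/2}$, hence $\int_{|\xi|<r_0}h\ge c\sum_k A_kr_k^{\,n/2}=+\infty$ for a positive constant $c$, once more contradicting $h\in L^1_{\mathrm{loc}}$. Thus in every case $\liminf_{\xi\to 0}|\xi|^\theta h(\xi)=\infty$ is impossible.

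I expect the main obstacle to be precisely the boundary case $\theta=n/2$: there the pointwise bootstrap stalls, and local integrability must instead be violated by balancing the doubly exponential growth of the constants $A_k$ against the merely geometric shrinkage of the radii $r_k$; the point that makes the balance work is that the constant in the integral estimate is independent of the step because $n-2\alpha_k$ vanishes. A secondary technical point is keeping the lower bound valid for both convolution factors simultaneously, which is why the $\eta$-integration is confined to $\{|\eta|<r/2\}$ and then, in the estimate, to a small ball around $\xi$.
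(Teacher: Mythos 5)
Your proof is correct, and while it shares the paper's overall chaining philosophy it is organized quite differently. You track an explicit exponent sequence $\alpha_k=2^k(2\theta-n)+(n-\theta)$ that climbs past $n$ when $\theta>n/2$, and only when this stalls at $\theta=n/2$ do you switch to tracking the constants $A_k$ against the shrinking radii $r_k=3^{-k}\delta$. The paper instead fixes the exponent at $\theta$ throughout and encodes all the improvement in the nonincreasing function $\rho(x)=\sup\{r:|\xi|^\theta h(\xi)>x\text{ on }B^*(r)\}$; the same $n-2\theta$ combination that drives your exponent bootstrap appears there in the factor $\lambda(x)=C_{n,\theta}[\rho(x)]^{n-2\theta}$, and the key inequality $\rho(x^2\lambda(x))\ge\tfrac12\rho(x)$ plays the role of your step $(A_k,r_k)\mapsto(A_{k+1},r_{k+1})$. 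Both approaches split the delicate boundary case $\theta=n/2$ off in some form, and both exploit that the gain per step (doubly exponential in the constant, or $x_k\ge 2^{2^k}$ in the paper) eventually overwhelms the geometric loss in radius.

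The other genuine difference is the final contradiction. You terminate by showing $\int_{|\xi|<r}h\,d\xi=\infty$ and invoking $h\in L^1_{\mathrm{loc}}$, which is a consequence of $h$ being a tempered function; this also absorbs the $\theta\ge n$ case in one stroke. The paper instead fixes a point $\xi_0\in\Omega_h$, uses Corollary~\ref{cor:1} to keep $h$ comparable to $h(\xi_0)$ on a small neighborhood, and shows the single value $h(\xi_0)$ would be forced to be infinite, contradicting $\xi_0\in\Omega_h$; in the $\theta\ge n$ case it instead deduces $h\equiv\infty$ on a small punctured ball, violating Definition~\ref{def:maj.ker}(ii). Your route is slightly more self-contained in that it does not need Corollary~\ref{cor:1}; the paper's route avoids appealing to local integrability and works directly with finiteness at a single point. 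Both are legitimate, and your estimate and the closed form for $\alpha_k$ and $A_k$ check out.
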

\begin{proof}    Fix $n \geq 1$ and take $\theta \geq n/2$.  Suppose for contradiction  that there exists $h  \in \mathcal H^\theta(\Rn)$ with $\liminf_{\xi \rightarrow 0} |\xi|^\theta h(\xi) = \infty$.  
We may assume $h$ is standardized.   
For $x>0$, let 
\[ \rho(x) = \sup \{ r>0 : |\xi|^\theta h (\xi) > x \;\;  \forall   \xi \in B^*( r)\}.\]
Notice that $\rho$ is a non-increasing function of $x$ with $\lim_{x \to \infty} \rho (x) = 0$.  Furthermore, for any $\xi \in B^*( \rho(x)/2)$, 
\begin{align}
 \label{calc} 
 \nonumber
 |\xi|^\theta h(\xi) \geq   \int_{|\eta|<\rho(x)/2} h(\eta) h(\xi-\eta) d \eta 
 \geq {}&   \int_{|\eta|<\rho(x)/2}\frac{ x^2 }{|\eta|^{\theta}  |\xi-\eta|^{\theta}} d \eta  \\
  &\geq    \frac{x^2} {[\rho(x)]^{\theta} }\int_{|\eta|<\rho(x)/2} \frac{d\eta}{|\eta|^{\theta}}.
\end{align}

The cases $\theta \geq n$ and $n/2 \leq \theta < n$, are now considered separately.  We can easily dispense with the first case.  If $\theta \geq n$ and $\rho(x) >0$, then
$  \int_{|\eta|<\rho(x)/2} |\eta|^{-\theta} d \eta = \infty$,
giving $h(\xi) \equiv \infty$ on $B^*(\rho(x)/2)$  and violating Definition \ref{def:maj.ker}.  Therefore if $h \in \mathcal H^\theta(\Rn)$ and $\theta \geq n$ then $\liminf_{\xi \rightarrow 0} h(\xi) \neq \infty. $

For $\theta \in [n/2, n)$, the calculation in \eqref{calc} gives
\begin{equation}\label{calc2}
|\xi|^\theta h(\xi)  \geq   C_{n,\theta}  x^2 [\rho(x) ]^{n-2\theta} \quad \text{for}\quad  \xi \in B^*(\rho(x)/2) 
\end{equation}
 where $C_{n,\theta} = \pi^{n/2} ( (n-\theta) 2^{n-\theta} \Gamma(n/2) )^{-1}$ depends only on $n$ and $\theta$.  
This will give a lower bound on the rate at which $|\xi|^\theta h(\xi) \to \infty$ as $|\xi| \to 0$.  In particular if we  define  
\begin{equation}
\nonumber
 \lambda (x) =  C_{n, \theta}  [\rho(x)]^{n-2\theta},
\end{equation}
then inspection of \eqref{calc2} gives
\begin{equation} \label{eq:rhobound}
\rho(x^2 \lambda(x) ) \geq 2^{-1} \rho(x).  
\end{equation}

We now define a rapidly increasing sequence $\{x_k \}$ iteratively in a way that allows inequality  \eqref{eq:rhobound} to control the corresponding decrease in $\rho(x_k)$.   This will yield a contradiction with the assumption that $\liminf_{\xi \rightarrow 0} |\xi|^\theta h(\xi) = \infty$.  
First observe that for $n/2 < \theta < n$, $\lambda(x)$ is a non-decreasing function of $x$ with 
$\lambda(x)  \rightarrow \infty $ as $x \rightarrow \infty$.  If $\theta = n/2$ then 
$\lambda (x) = C_{n, n/2} = 2 (\pi/2)^{n/2}  \left( n \Gamma(n/2) \right)^{-1}$.
Accordingly these two cases are treated separately in defining $\{x_k \}$.    
For $\theta \in (n/2 , n)$, fix $x_0 \geq 2$ large enough to also have $\lambda(x_0) \geq 2$ and define  $\{x_k\}_{k \geq 1}$ iteratively via
\[x_k = x^2_{k-1} \lambda( x_{k-1} ) \quad \text{for} \quad k \geq 1.\]
Then $x_1 \geq 2 x_0^2 \geq 2^3$ and by induction, 
\[x_k \geq 2 x^2_{k-1} \geq 2^{2^{k+1} -1}. \]
For $\theta = n/2$, take $x_0 = \max\{2, 2 (C_{n,{n/2}} )^{-1} \}$ and define  $\{x_k\}_{k \geq 1}$ iteratively via
\[x_k = C_{n,{n/2}}   x^2_{k-1}  = x^2_{k-1}  \lambda(x_{k-1}) \text{ for } k \geq 1.\]
Then $x_1 = \max\{ 2^2 C_{n,{n/2}}, 2^2 (C_{n,{n/2}})^{-1} \}$ and by induction, 
$$ x_k = \max\{ 2^{2^k} (C_{n,n/2})^{2^k-1},  2^{2^k} (C_{n,{n/2}})^{-1} \}.$$ 
Since $\max\{ (C_{n,n/2})^{2^k-1},  (C_{n,{n/2}})^{-1} \} \geq 1$, in both cases the sequence satisfies both
\begin{equation} \label{eq:sequence.a}
x_k \geq 2^{2^k},  \quad  k \geq 0
\end{equation}
and 
\begin{equation} \label{eq:sequence.b}
 x_k = x^2_{k-1} \lambda( x_{k-1} ), \quad  k \geq 1.
 \end{equation}
In particular \eqref{eq:sequence.a}  gives $\rho(x_k) \to 0$ as $ k \to \infty$.  On the other hand \eqref{eq:sequence.b} combined with \eqref{eq:rhobound} controls the rate of decrease of $\rho(x_k)$ via
\[ \rho(x_k) \geq 2^{-1} \rho(x_{k-1}) \geq 2^{-k} \rho(x_0).\]

To conclude the proof, fix $\xi_0 \in \Omega_h$, $\xi_0 \neq 0$.  By Corollary \ref{cor:1} there exists $\delta, \epsilon > 0$ such that 
$\inf_{|\eta | < \delta} h(\xi_0-\eta) \geq \epsilon h(\xi_0).$
Take $k$ large enough to have $\rho(x_k) < \delta$.  Then
\begin{align}\label{eq:xi.0.h} 
 \nonumber 
 |\xi_0|^\theta h(\xi_0) \geq{}&
  \int\limits_{|\eta| < \rho(x_k) }  
 h(\eta) h(\xi_0 - \eta)   d\eta \; \geq \; \epsilon h(\xi_0) \int\limits_{|\eta| < \rho(x_k)  }    
 h(\eta)  d\eta
 \\ 
 & \geq   \epsilon h(\xi_0)  \int\limits_{|\eta| <\rho(x_k) } 
	\frac{x_k}{|\eta |^\theta } d\eta  \; = \;  \epsilon  h(\xi_0)   C^\prime_{n,\theta} x_k  \rho^{n-\theta} (x_k) \\ \nonumber 
	& \geq \epsilon  h(\xi_0)   C^\prime_{n,\theta} 2^{2^{k} } \left( 2^{-k} \rho(x_0) \right)^{n-\theta}
\end{align}	
where $C^\prime_{n,\theta}  = 2^{n-\theta}C_{n,\theta}   $  depends only on $n$ and $\theta$.
As the right-hand side of  \eqref{eq:xi.0.h} becomes arbitrarily large as $k \rightarrow \infty$, contradicting the finiteness of the left-hand side, we conclude that if $h \in \mathcal H^\theta(\Rn)$ with $\theta \geq n/2$ then $\liminf_{\xi \rightarrow 0} |\xi|^\theta h(\xi) \neq \infty$.  
\end{proof}

\begin{proof}[Proof of Theorem \ref{th:dim.constraint}] 
Suppose for contradiction 
$h \in \mathcal H^\theta(\Rn)$  with  $\theta \geq n/2$.  Assume $h$ is standardized.   By Lemmas \ref{lem:0.or.infty} and \ref{lem:not.infty}, $\liminf_{\xi \rightarrow 0} |\xi|^\theta h(\xi) = 0$. 
 Applying Fatou's Lemma we find 
\begin{align} \nonumber 
 0 =  \liminf_{\xi \rightarrow 0} |\xi|^\theta h(\xi)  
 \geq{}& 
 \liminf_{\xi \rightarrow 0}  \int_\Rn h(\eta ) h(\xi- \eta) d \eta \\ 
 \nonumber
 & \geq 
 \int_\Rn h(\eta ) \liminf_{\xi \rightarrow 0}  h(\xi-  \eta) d \eta  
 = \int_\Rn h(\eta ) \liminf_{\xi \rightarrow - \eta }  h(\xi) d \eta ,  
\end{align}
implying that for almost all $\eta \in  \Rn$  
\begin{equation}\label{eq:liminf2}
  h(\eta) \liminf_{\xi \rightarrow - \eta }  h(\xi) = 0.
 \end{equation} 
In particular this holds for almost all $\eta \in \Rn$ such that $-\eta \in \Omega_h.$   
For any such  $-\eta  \in \Omega_h$, $h(\xi)$ is bounded away from zero in a neighborhood of $- \eta $ by Corollary \ref{cor:1} and since  $h(\eta) > 0$ for all $\eta $, 
\begin{equation}
\nonumber
 h(\eta) \liminf_{\xi \rightarrow - \eta }  h(\xi) >  0.
 \end{equation} 
This contradicts \eqref{eq:liminf2}.  Therefore if $\theta \geq n/2$ then $\mathcal H^\theta(\Rn) = \emptyset$, or equivalently, if $h \in \mathcal H^\theta(\Rn)$ then $\theta < n/2$.  
\end{proof}

\section{Embedding properties and relation to Koch-Tataru solutions}
\label{sec:4}
In this section we discuss  properties of majorizing kernels and majorization spaces implied by Theorem \ref{th:dim.constraint}. One consequence in particular, is that for a given  majorizing kernel that behaves algebraically at the origin and at infinity we have the continuous embedding $\Fh \hookrightarrow \PM{n-\theta}$  (a slight modification is needed if $\theta = 0$).   This is part of a chain of continuous embeddings from $\Fh$ up to the spaces $BMO^{-1}$, $BMO^{-1}_T$ and $\overline{VMO}{}^{-1}$, the initial value spaces for the Koch-Tataru solutions of the Navier-Stokes equations.   If $\theta = 1$ then we have
\begin{equation}
\nonumber
 \Fh \hookrightarrow \PM{n-1} \hookrightarrow \dot{B}^{-1 + \frac{n}{p},  \infty}_p \hookrightarrow BMO^{-1}  \quad(n < p < \infty),
\end{equation} 
and if $0 < \theta < 1$ then for  $0< T < 1$ we have
\begin{equation}  
\nonumber
\Fh \hookrightarrow \PM{n-\theta} \hookrightarrow \dot{B}^{-\theta + \frac{n}{p},  \infty}_p \hookrightarrow B^{-\theta + \frac{n}{p},  \infty}_p  \hookrightarrow BMO_T^{-1}  \quad  (\frac{n}{\theta} < p < \infty).   
\end{equation} 
Here, as in the sequel, it is convenient to ignore the distinction between spaces of scalar-valued and 
vector-valued functions.  We follow the index convention in \cite{Grafakos.vol.2:08}  for the homogeneous and inhomogeneous Besov spaces, $\dot B_p^{s, q}$ and $B_p^{s, q}$ respectively. 
     Although the successive embeddings after $\Fh 
\hookrightarrow \PM{n-\theta}$ are known, we record them here for completeness and to emphasize 
how the dichotomy between cases $\theta < 1$ and $\theta = 1$ aligns with the dichotomy between local 
and global solutions in the endpoint spaces.  Finally, we note that there are majorization spaces 
$\Fh \not\subset \PM{n-\theta}$ that embed further along these chains.  Proposition \ref{prop:besov}  
illustrates how this can occur.  
Since the class of majorizing kernels has itself  not  been completely characterized we are not able to 
locate all majorization spaces on scales of classical or better known Banach spaces.

\subsection{The endpoint spaces}
Recall that Koch and Tataru \cite{Koch:Tataru:01} consider the iteration scheme for solving the mild 
formulation of the Navier-Stokes equations in the largest critical space of tempered distributions  
subject to the condition that  $e^{t \Delta} u_0$ belong to $ L^2_{\text{loc}}(\Rn \times [0, \infty))$ so that the bilinear term makes sense.    
This is the function space $BMO^{-1} = BMO^{-1}(\Rn)$ admitting a Carleson measure characterization through the norm 
 \begin{equation}
 \nonumber
\| f \|_{BMO^{-1}} = \sup_{x, \, R >0} \bigg( \frac{1}{|B(x,R )| }  \int_0^{R^2} \int_{B(x,R)} |e^{t\Delta} f |^2 dy dt \bigg)^{1/2}. 
 \end{equation} 
Here $B(x, R)$ denotes the ball of radius $R$ centered at $x \in \Rn$, and $| B(x,R) |$ is its 
Lebesgue measure.  Equivalently, $BMO^{-1}$ consists of functions that can be written as the  
divergence of vector fields whose components belong to $BMO$, the  space of functions of bounded 
mean oscillation.
 
In \cite{Koch:Tataru:01} it is shown that given sufficiently small  initial datum $u_0 \in BMO^{-1}$,  there exists a mild solution of the Navier-Stokes equations issued from $u_0$ in the path space  
$X$ of functions defined on $\Rn \times \R{}^+$ with norm
\begin{equation}
\nonumber
\| u \|_X = \sup_t t^{1/2} \| u(t) \|_{L^\infty(\Rn)} +\sup_{x, \, R > 0} \bigg(\frac{1}{|B(x,R )| } \int_0^{R^2} \int_{B(x, R)} |u|^2 dy dt \bigg)^{1/2}.   
\end{equation}
It is also shown that there exists a constant $\varepsilon_0$ such that for all $\|u_0 \|_{BMO_T^{-1}} < \varepsilon_0$ there exists a mild solution in the local path space $X_T$, defined by the norm 
\begin{equation}
\nonumber
\| u \|_{X_T} = \sup_{0 < t < T}  t^{1/2} \| u(t) \|_{L^\infty(\Rn)} +  \sup_{x,\; 0 < R^2 < T} \bigg( \frac{1}{|B(x,R)|} \int_0^{R^2} \int_{B(x, R)} |u|^2 dy dt \bigg)^{\textstyle \frac{1}{2}}.   
\end{equation}
Here $BMO_T^{-1}$ is defined as $BMO^{-1}$ except that we only consider balls of size $\sqrt{T}$ and smaller: 
\begin{equation}
\nonumber
\| f \|_{BMO_T^{-1}} = \sup_{x, \, 0 < R^2 < T} \bigg( \frac{1}{|B(x, R)| }  \int_0^{R^2} \int_{B(x,R)} |e^{t\Delta} f |^2 dy dt \bigg)^{1/2}.   
\end{equation}
Finally,   ${\overline{VMO}}{}^{-1} : = \big\{ f \in BMO^{-1}_1 : \| f \|_{BMO^{-1}_T} \rightarrow 0 \text{ as } T \rightarrow 0 \big\}$, following the notation of \cite{Koch:Tataru:01}. In \cite{Miura:05} and elsewhere, this space is denoted by $vmo^{-1}$.         
\subsection{Continuous embeddings of  majorization spaces}  We first define a class of majorizing 
kernels having certain algebraic growth and decay properties, and then consider  continuous 
embeddings  of the  associated majorization spaces.   
\begin{definition}
A majorizing kernel $h \in \mathcal H^\theta(\Rn)$  \emph{behaves (algebraically) at the origin as $|\xi|^{-\alpha}$} or  \emph{blows up (algebraically) at the origin as $|\xi|^{-\alpha}$} if there exists an $\alpha  \geq 0$ such that $h(\xi) = O(|\xi|^{-\alpha})$ as $\xi \rightarrow 0$ and $|\xi|^{-\alpha} = 
O(h(\xi))$ as $\xi \rightarrow 0$.       
\end{definition}

\begin{definition}\label{def:omega}
A majorizing kernel $h \in \mathcal H^\theta(\Rn)$ \emph{behaves (algebraically) at infinity as $|\xi|^{-\omega}$} or  \emph{decays (algebraically) at infinity as $|\xi|^{-\omega}$} if there exists an $\omega > 0$ such that $h(\xi) = O(|\xi|^{-\omega})$ as $\xi \rightarrow \infty$ and $|\xi|^{-\omega} = O(h(\xi))$ as $\xi \rightarrow \infty$.      
\end{definition}

\begin{definition} 
\label{def:r.alg}A majorizing kernel has \emph{radial algebraic growth and decay} if 
\begin{enumerate}
\item it behaves algebraically at the origin as $|\xi|^{-\alpha}$ for some $\alpha \geq 0$; 
\item it decays algebraically at infinity as $|\xi|^{-\omega}$  for some $\omega > 0$;
\item it is bounded on the complement of any neighborhood of the origin.
\end{enumerate}   The subclass of $\mathcal H^\theta(\Rn)$ consisting of those majorizing kernels with radial algebraic growth and decay is denoted $\mathcal H_{\alpha,\omega}^\theta (\Rn)$.  
\end{definition}

Note that if $ h \in \mathcal H_{\alpha,\omega}^\theta(\Rn)$ then for any  $\lambda > 0$ the elements $f \in \Fh$ satisfy the scaling relation 
\begin{equation}\label{eq:scaling2}
 C_\lambda\| f \|_\Fh \leq \| \lambda^\theta f_\lambda  \|_{\Fh} \leq C^\prime_\lambda  \| f \|_{\Fh}
 \end{equation}  where $f_\lambda(x)  = f(\lambda x)$ and constants $C_\lambda$ and $C_\lambda^
 \prime$ may depend on $\lambda$.  On the other hand this property does not characterize  $
 \mathcal H_{\alpha,\omega}^\theta (\Rn)$.  The majorizing kernels discussed in 
Proposition \ref{prop:products} for example,  satisfy \eqref{eq:scaling2} with $C_\lambda \equiv C_
\lambda^\prime \equiv 1$, yet do not belong to $\mathcal H_{\alpha,\omega}^\theta (\Rn)$.  Majorization spaces also exist that satisfy \eqref{eq:scaling2} only for certain $\lambda$; taking 
$h(\xi) = (2 \pi)^{-1} |\xi|^{-1} e^{-|\xi|} \in \mathcal H^1(\R3)$ for example, 
yields a majorization space $\Fh$ that satisfies \eqref{eq:scaling2} only for $\lambda \leq 1$.  In this case, 
$ \lambda \| f \|_\Fh \leq \| \lambda  f_\lambda \|_{\Fh} \leq \lambda^{-1}  \| f \|_{\Fh}$. 

\begin{theorem}\label{th:alpha} Suppose $h \in \mathcal H^\theta(\Rn)$ blows up at the origin as $|\xi|^{-\alpha}$.  If $0 <  \theta  < n/2$  then  $\alpha \leq n - \theta$, and  if $\theta = 0$ then $\alpha < n$. 
\end{theorem}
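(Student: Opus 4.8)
The plan is to pit a lower bound for $h*h$ near the origin against the upper bound $h*h(\xi)\le B|\xi|^\theta h(\xi)$ built into the definition of a majorizing kernel (Definition \ref{def:maj.ker}). Write the blow-up hypothesis quantitatively as
\[
 c_1|\xi|^{-\alpha}\ \le\ h(\xi)\ \le\ c_2|\xi|^{-\alpha},\qquad 0<|\xi|<\delta,
\]
for suitable constants $c_1,c_2,\delta>0$. Since $h\ge 0$, restricting the convolution integral to $|\eta|<\delta/2$ and taking $|\xi|<\delta/2$ (so that $|\xi-\eta|<\delta$ on the domain of integration) yields
\[
 h*h(\xi)\ \ge\ c_1^2\int_{|\eta|<\delta/2}\frac{d\eta}{|\eta|^{\alpha}\,|\xi-\eta|^{\alpha}},\qquad 0<|\xi|<\delta/2 .
\]

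First I would rule out $\alpha\ge n$: for such $\alpha$ the integrand above is non-integrable near $\eta=0$ for every fixed $\xi$ with $0<|\xi|<\delta/2$, whence $h*h(\xi)=\infty$ there, contradicting $h*h(\xi)\le B|\xi|^\theta h(\xi)\le Bc_2|\xi|^{\theta-\alpha}<\infty$. Thus $\alpha<n$ in all cases, which already settles the statement when $\theta=0$. When $0<\theta<n/2$ and $\alpha\le n/2$ we immediately get $\alpha\le n/2<n-\theta$, so it remains only to handle the range $n/2<\alpha<n$.

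In that range I would rescale by $\eta=|\xi|\zeta$ to obtain
\[
 \int_{|\eta|<\delta/2}\frac{d\eta}{|\eta|^{\alpha}|\xi-\eta|^{\alpha}}
 \ =\ |\xi|^{\,n-2\alpha}\int_{|\zeta|<\delta/(2|\xi|)}\frac{d\zeta}{|\zeta|^{\alpha}\,|e_\xi-\zeta|^{\alpha}},\qquad e_\xi=\frac{\xi}{|\xi|}.
\]
For $n/2<\alpha<n$ the Riesz-type integral $I_\alpha=\int_{\Rn}|\zeta|^{-\alpha}|e-\zeta|^{-\alpha}\,d\zeta$ is finite and positive, and independent of the unit vector $e$ by rotational symmetry; the tail $\int_{|\zeta|>M}|\zeta|^{-\alpha}|e-\zeta|^{-\alpha}\,d\zeta$ is bounded, for $M\ge 2$ and uniformly in $e$, by $2^{\alpha}\int_{|\zeta|>M}|\zeta|^{-2\alpha}\,d\zeta$, which tends to $0$ as $M\to\infty$. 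Choosing $M$ so that this tail is below $I_\alpha/2$, the truncated integral exceeds $I_\alpha/2$ once $|\xi|<\delta/(2M)$, so that
\[
 h*h(\xi)\ \ge\ \tfrac12\,c_1^2\,I_\alpha\,|\xi|^{\,n-2\alpha}\qquad\text{for all sufficiently small }|\xi| .
\]

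Finally, combining this with $h*h(\xi)\le Bc_2|\xi|^{\theta-\alpha}$ gives $|\xi|^{\,n-\alpha-\theta}\le 2Bc_2/(c_1^2I_\alpha)$ for all sufficiently small $|\xi|>0$; letting $|\xi|\to 0$ forces $n-\alpha-\theta\ge 0$, i.e.\ $\alpha\le n-\theta$, as desired. The only genuinely delicate point is the uniform-in-direction lower bound for the truncated Riesz integral after rescaling; everything else is routine bookkeeping with the two-sided algebraic bounds on $h$ and the majorizing kernel inequality.
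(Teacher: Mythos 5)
Your proposal is correct and takes essentially the same approach as the paper: both rule out $\alpha\ge n$ by non-integrability, reduce to the range $n/2<\alpha<n$, bound $h*h(\xi)$ below by a truncated Riesz-type integral growing like $|\xi|^{n-2\alpha}$, and compare with the upper bound $h*h(\xi)\lesssim|\xi|^{\theta-\alpha}$ to force $\alpha\le n-\theta$. The only cosmetic differences are that you rescale $\eta=|\xi|\zeta$ and argue directly, while the paper invokes the convolution identity $|\xi|^{-\alpha}*|\xi|^{-\alpha}=C_2|\xi|^{-(2\alpha-n)}$ and argues by contradiction (assuming $\alpha>n-\theta$, which by Theorem~\ref{th:dim.constraint} already forces $2\alpha>n$).
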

\begin{proof}  
Assume $h \in \mathcal H^\theta(\Rn)$ behaves as $|\xi|^{-\alpha}$ at the origin and is standardized.  
Then $\alpha < n$ lest $h*h(\xi) \equiv \infty$.    Suppose for contradiction $\alpha > n-\theta$. Then  
$2 \alpha > n$ by Theorem \ref{th:dim.constraint}.    Since $|\xi|^{-\alpha} = O(h(\xi))$ as $\xi 
\rightarrow 0$, there exist constants $R > 0$ and $ C > 0 $ such that $|\xi |^{-\alpha} \leq C h(\xi)
$ for all $|\xi| < R$. Then for all $|\xi| < R/3$ we have 
\begin{align}\label{eq:origin.behavior} \nonumber
C \geq  |\xi|^{\alpha - \theta}h*h(\xi) 
 \geq{}& C_1 |\xi|^{\alpha - \theta} \int_{|\eta| < 2 R/3} \frac{d \eta}{|\xi - \eta |^{\alpha} |\eta|^{\alpha}} \\ 
& \geq C_1 |\xi|^{\alpha - \theta} \bigg\{ \int_\Rn \frac{ d \eta}{|\xi-\eta|^{\alpha}|\eta|^{\alpha }} - \int_{|\eta| > 2 R/3}  \frac{ d \eta}{|\xi-\eta|^{\alpha}|\eta|^{\alpha }} \bigg\} \\ \nonumber
& \geq C_1\bigg\{ \frac{C_2}{|\xi|^{\alpha - (n -\theta)} }- C_3 |\xi|^{\alpha - \theta}\bigg\},   
\end{align}
using the convolution  equality $|\xi|^{-\alpha} * |\xi|^{-\alpha} = C_2 |\xi|^{-(2 \alpha -n)}$ and the  estimate
\begin{equation}\nonumber
\int_{|\eta| > 2 R/3} \frac{ d \eta}{|\xi-\eta|^{\alpha}|\eta|^{\alpha }} \leq C_3 < \infty,
\end{equation} 
both of which hold if $ n/2 <  \alpha < n$.   
Then the right hand side of \eqref{eq:origin.behavior} tends to $+\infty$ as $\xi \rightarrow 0$ which contradicts 
the finiteness of the left hand side.  Hence if $h(\xi)$  behaves as $|\xi|^{-\alpha}$ at the origin, then $\alpha \leq n-\theta$.     
\end{proof}
\begin{theorem}\label{th:omega}
If $h(\xi) \in \mathcal H^\theta(\Rn)$ decays  at infinity as  $|\xi|^{-\omega}$ then $\omega \geq n- \theta$.  
\end{theorem}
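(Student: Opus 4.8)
The plan is to argue by contradiction, exploiting that a function decaying only like $|\xi|^{-\omega}$ has a self-convolution that decays too slowly to be majorized by $|\xi|^{\theta-\omega}$ when $\omega$ is small. As usual we may assume $h$ is standardized (replace $h$ by $B^{-1}h$), so that $h*h(\xi)\le |\xi|^{\theta}h(\xi)$. Suppose, for contradiction, that $\omega < n-\theta$; since $\theta\ge 0$ this forces $\omega < n$, which is the only regime we will need. The hypothesis that $h$ decays at infinity as $|\xi|^{-\omega}$ (Definition \ref{def:omega}) furnishes a radius $R_0>0$ and constants $c_1,C_1>0$ with
\[
 c_1|\xi|^{-\omega}\ \le\ h(\xi)\ \le\ C_1|\xi|^{-\omega}\qquad(|\xi|>R_0).
\]
The upper inequality together with the majorizing inequality gives, for $|\xi|>R_0$, the bound $h*h(\xi)\le |\xi|^{\theta}h(\xi)\le C_1|\xi|^{\theta-\omega}$. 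The goal is now to produce a lower bound on $h*h(\xi)$ that grows strictly faster as $|\xi|\to\infty$.

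For this I would integrate $h*h$ over a region where the integrand is easy to control. Fix $|\xi|>2R_0$ and restrict the convolution integral to $|\eta|<|\xi|/2$. On this set $R_0<|\xi|/2<|\xi-\eta|<\tfrac{3}{2}|\xi|$, so $h(\xi-\eta)\ge c_1|\xi-\eta|^{-\omega}\ge c_1(\tfrac{3}{2}|\xi|)^{-\omega}$, whence
\[
 h*h(\xi)\ \ge\ c_1\Bigl(\tfrac{3}{2}\Bigr)^{-\omega}|\xi|^{-\omega}\int_{|\eta|<|\xi|/2}h(\eta)\,d\eta\ \ge\ c_1^{\,2}\Bigl(\tfrac{3}{2}\Bigr)^{-\omega}|\xi|^{-\omega}\int_{R_0<|\eta|<|\xi|/2}|\eta|^{-\omega}\,d\eta .
\]
Because $\omega<n$, the remaining integral equals a positive constant (depending only on $n$ and $\omega$) times $\bigl((|\xi|/2)^{\,n-\omega}-R_0^{\,n-\omega}\bigr)$, which is at least a fixed positive multiple of $|\xi|^{\,n-\omega}$ once $|\xi|$ is large enough. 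Hence there are $c_2>0$ and $R_1\ge 2R_0$ with $h*h(\xi)\ge c_2|\xi|^{\,n-2\omega}$ for all $|\xi|>R_1$.

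Comparing the two estimates, for every $|\xi|>R_1$ we get $c_2|\xi|^{\,n-2\omega}\le h*h(\xi)\le C_1|\xi|^{\,\theta-\omega}$, that is, $c_2|\xi|^{\,n-\omega-\theta}\le C_1$. But the assumption $\omega<n-\theta$ means $n-\omega-\theta>0$, so the left-hand side tends to $+\infty$ as $|\xi|\to\infty$, a contradiction. Therefore $\omega\ge n-\theta$, as claimed. There is no real obstacle in this argument; the only points needing care are keeping the annulus $R_0<|\eta|<|\xi|/2$ nonempty (hence the restriction $|\xi|>2R_0$) and noting that on $|\eta|<|\xi|/2$ the factor $|\xi-\eta|$ stays comparable to $|\xi|$, so that the pointwise lower bound $h(\xi-\eta)\gtrsim|\xi|^{-\omega}$ is legitimate. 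The structure parallels the proof of Theorem \ref{th:alpha} but is simpler, since only one annular region is involved and no cancellation of convolution integrals is required.
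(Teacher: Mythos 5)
Your proof is correct, and it reaches the same estimate $h*h(\xi)\gtrsim|\xi|^{n-2\omega}$ as the paper's, but by integrating over a different region. The paper restricts the convolution integral to the tail $\{|\eta|\ge 2|\xi|\}$, where both $|\eta|$ and $|\xi-\eta|$ are large and comparable; the convergence of $\int_{|\eta|\ge 2|\xi|}|\eta|^{-2\omega}\,d\eta$ then forces $2\omega>n$ as a byproduct, and the comparison $|\xi|^{n-2\omega}=O(|\xi|^{\theta-\omega})$ gives $\omega\ge n-\theta$ (with Theorem~\ref{th:dim.constraint} only used to note that the extra bound $\omega>n/2$ is superfluous). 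You instead restrict to the annulus $\{R_0<|\eta|<|\xi|/2\}$ near the origin, where $|\xi-\eta|\sim|\xi|$, pull out the factor $|\xi|^{-\omega}$, and let the mass of $h$ over a growing ball supply the $|\xi|^{n-\omega}$ factor; this needs $\omega<n$, which you correctly extract from the contradiction hypothesis $\omega<n-\theta$ and $\theta\ge 0$. Your route is slightly more self-contained (no appeal to Theorem~\ref{th:dim.constraint} and no incidental constraint $\omega>n/2$), while the paper's tail argument has the small advantage of producing both bounds $\omega>n/2$ and $\omega\ge n-\theta$ simultaneously without assuming anything about $\omega$ a priori. Both are sound; the difference is essentially which end of the convolution integrand you exploit.
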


\begin{proof}
Assume $h$ behaves as $|\xi|^{-\omega}$ at infinity and is  standardized.  There exists an $R >0$ and constants $C_1$, $C_2$  such that $C_1 |\xi|^{-\omega} \leq h(\xi) \leq C_2 |\xi|^{-\omega}$ whenever $ |\xi| > R$. 
Using the fact that $2 |\xi - \eta| \geq |\eta|$ whenever  $|\eta | \geq 2 |\xi|$, we have for all $\xi$ such that $|\xi| > R$, 
\begin{align}
\nonumber
C_2 |\xi|^{\theta-\omega} \geq  | \xi|^\theta h(\xi) \geq h*h(\xi) \geq{}& \int_{|\eta| \geq 2 |\xi|}h(\xi - \eta) h(\eta) d\eta \\
\nonumber
& \geq 2^{-\omega} C_1^2 \int_{|\eta| \geq 2 |\xi|} |\eta|^{-2 \omega}  d\eta = C |\xi|^{n-2\omega}. 
\end{align}
This implies $2 \omega > n$ and   $|\xi|^{n-2\omega} = O(|\xi|^{\theta - \omega})$ as $\xi \rightarrow \infty$,  hence $\omega \geq \max \{ n/2,  n  - \theta \}.$  The maximum here is superfluous by  Theorem \ref{th:dim.constraint}, and we have  $\omega \geq n - \theta > n/2$.  
\end{proof}

Given Banach spaces $X$ and $Y$ we write $X \hookrightarrow Y$ 
to denote the continuous embedding of $X$ into $Y$.   

\begin{theorem} 
\label{th:emb1}
Suppose $h \in \mathcal H^\theta_{\alpha, \omega}(\Rn)$, $\theta < n/2$ and $n \geq 2$.    
If $\theta = 1$ then $ \Fh \hookrightarrow \PM{n-1} \hookrightarrow BMO^{-1}$.
If  $0  <  \theta < 1$, then  $\Fh \hookrightarrow \PM{n-\theta} \hookrightarrow BMO_T^{-1}$ for all $T > 0$.  
\end{theorem}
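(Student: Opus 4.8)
The plan is to build the displayed chains in two independent stages. The first stage establishes the embedding $\Fh\hookrightarrow\PM{n-\theta}$; this is exactly where the exponent bounds $\alpha\le n-\theta\le\omega$ coming from Theorems~\ref{th:alpha} and \ref{th:omega} (and, through them, from Theorem~\ref{th:dim.constraint}) are used. The second stage establishes $\PM{n-1}\hookrightarrow BMO^{-1}$ when $\theta=1$ and $\PM{n-\theta}\hookrightarrow BMO_T^{-1}$ (every $T>0$) when $0<\theta<1$, by a frequency-splitting estimate against the Carleson-measure norm, of the kind underlying the Koch--Tataru well-posedness theory.

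\textbf{Stage 1.} Since $|\hat f(\xi)|\le h(\xi)\,\|f;\Fh\|$ for every $\xi$, one has $\|f;\PM{n-\theta}\|\le M\,\|f;\Fh\|$ with $M:=\sup_\xi|\xi|^{n-\theta}h(\xi)$, so it is enough to verify $M<\infty$. I would split $\Rn$ into a neighbourhood of the origin, a neighbourhood of infinity, and the annulus in between. Near the origin $h(\xi)=O(|\xi|^{-\alpha})$, and Theorem~\ref{th:alpha} (which applies since $0<\theta<n/2$) gives $\alpha\le n-\theta$, so $|\xi|^{n-\theta}h(\xi)=O(|\xi|^{\,n-\theta-\alpha})$ stays bounded as $\xi\to0$. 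Near infinity $h(\xi)=O(|\xi|^{-\omega})$, and Theorem~\ref{th:omega} gives $\omega\ge n-\theta$, so $|\xi|^{n-\theta}h(\xi)=O(|\xi|^{\,n-\theta-\omega})$ stays bounded as $\xi\to\infty$. On the intervening annulus $|\xi|^{n-\theta}$ is bounded and $h$ is bounded by part~(3) of Definition~\ref{def:r.alg}. Hence $M<\infty$ and $\Fh\hookrightarrow\PM{n-\theta}$.

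\textbf{Stage 2.} Write $a=n-\theta$, take $f\in\PM{a}$ with $M=\|f;\PM{a}\|$ (so $|\hat f(\xi)|\le M|\xi|^{-a}$ a.e.), and fix a ball $B(x_0,R)$. I would split $f=f_{\mathrm{lo}}+f_{\mathrm{hi}}$ at frequency $1/R$, i.e.\ $\widehat{f_{\mathrm{lo}}}=\hat f\,\ind_{\{|\xi|<1/R\}}$. For the high frequencies, enlarging the spatial integral to $\Rn$ and applying Plancherel together with $\int_0^\infty e^{-2t|\xi|^2}\,dt=(2|\xi|^2)^{-1}$,
\[
\int_0^{R^2}\!\!\int_{B(x_0,R)}\!|e^{t\Delta}f_{\mathrm{hi}}|^2\,dy\,dt\;\le\;c\!\int_{|\xi|>1/R}\!\frac{|\hat f(\xi)|^2}{|\xi|^2}\,d\xi\;\le\;cM^2\!\int_{|\xi|>1/R}\!\frac{d\xi}{|\xi|^{2a+2}}\;=\;cM^2R^{\,2a+2-n},
\]
the last integral being finite because $a=n-\theta>n/2>(n-2)/2$. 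For the low frequencies, $\theta>0$ makes $|\xi|^{-a}$ locally integrable, so $\widehat{f_{\mathrm{lo}}}\in L^1$ and $\|f_{\mathrm{lo}}\|_{L^\infty}\le c\,\|\widehat{f_{\mathrm{lo}}}\|_{L^1}\le cMR^{\,a-n}$; since $e^{t\Delta}$ is a contraction on $L^\infty$,
\[
\int_0^{R^2}\!\!\int_{B(x_0,R)}\!|e^{t\Delta}f_{\mathrm{lo}}|^2\,dy\,dt\;\le\;R^2\,|B(x_0,R)|\,\|f_{\mathrm{lo}}\|_{L^\infty}^2\;\le\;cM^2R^{\,2a+2-n}.
\]
Dividing by $|B(x_0,R)|\approx R^n$ and combining, $\frac{1}{|B(x_0,R)|}\int_0^{R^2}\!\int_{B(x_0,R)}|e^{t\Delta}f|^2\,dy\,dt\le cM^2R^{\,2(1-\theta)}$. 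If $\theta=1$ the power of $R$ is $0$, so the supremum over all $x_0$ and $R>0$ gives $\|f\|_{BMO^{-1}}\le c\,\|f;\PM{n-1}\|$; if $0<\theta<1$ the power is positive, and restricting to $R^2<T$ gives $\|f\|_{BMO_T^{-1}}\le c\,T^{(1-\theta)/2}\,\|f;\PM{n-\theta}\|$ for every $T>0$. Composing the two stages yields the theorem.

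The delicate point, and really the only one, is the tension between the two constraints on $a$: local integrability of the low-frequency part forces $a<n$ (i.e.\ $\theta>0$), while convergence of the high-frequency integral needs $a>(n-2)/2$ (guaranteed by $\theta<n/2$); moreover both parts contribute the \emph{same} power $R^{\,2(1-\theta)}$ of the radius, so it is precisely the value $a=n-\theta$---pinned down by $\alpha\le n-\theta\le\omega$ through Theorems~\ref{th:alpha} and \ref{th:omega}---that makes this power vanish in the global case $\theta=1$ (landing in $BMO^{-1}$) and positive but absorbed by the truncation $R^2<T$ in the local case $0<\theta<1$ (landing in $BMO_T^{-1}$). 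The remaining manipulations are routine.
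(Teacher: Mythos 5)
Your proof is correct and follows essentially the same strategy as the paper's: Stage~1 is the same use of Theorems~\ref{th:alpha} and \ref{th:omega} to bound $\sup_\xi|\xi|^{n-\theta}h(\xi)$, and Stage~2 rests on the same low/high frequency splitting that exploits $|\xi|^{-(n-\theta)}\in L^1+L^2$ precisely because $0<\theta<n/2$. The only presentational difference is that the paper first rescales $B(x_0,R)$ to the unit ball via \eqref{eq:scaling} and then applies a scale-$1$ decomposition $f=f^{(\infty)}+f^{(2)}$, whereas you perform the frequency split directly at scale $1/R$ and track the power $R^{2(1-\theta)}$ by hand; the two are equivalent.
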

\begin{remark} 
The following intermediate  embeddings are also known:   for $p \in (n,\infty)$, $n \geq 2$,  
$$
\PM{n-1} \hookrightarrow \dot B_p^{-1 + \frac{n}{p}, \infty} \hookrightarrow BMO^{-1}.
$$
See e.g.\ \cite[p.\ 267]{Cannone:Karch:04}, \cite[p.~228]{Bahouri:Chemin:Danchin:11}, respectively.  For $p \in (n/\theta, \infty)$, $n \geq 2$,  $0 < \theta < 1$ and  $0 < T \leq 1$, we also have
\begin{equation*}  
 \PM{n-\theta} \hookrightarrow \dot{B}^{-\theta + \frac{n}{p},  \infty}_p \hookrightarrow B^{-\theta + \frac{n}{p},  \infty}_p  \hookrightarrow BMO_T^{-1}, 
\end{equation*}
through a suitable  modification of \cite[Lemma 7.1]{Cannone:Karch:04} and \cite[Remark 4.2f.]
{Koch:Tataru:01}.  
\hspace{\fill}{\tiny $\blacksquare$}  
\end{remark}  

Recall that for the pair  $\{L^q = L^q(\Rn), L^r = L^r(\Rn) \}$,  $1 \leq q , r \leq \infty$,  the set  $L^q + L^r = \{ f_q + f_r : f_q \in L^q, f_r \in L^r \}$  becomes a Banach space when equipped with the norm 
$$ \| f \|_{L^q + L^r} = \inf \{ \| f_q \|_{L^q} + \| f_r \|_{L^r} : f = f_q + f_r \}.$$

\begin{proof}[Proof of Theorem \ref{th:emb1}]
We first consider simultaneously    
$\PM{n-1} \hookrightarrow BMO^{-1}$ ($n \geq 2$)  and  $\PM{n-\theta} \hookrightarrow BMO^{-1}_T$,  ($n \geq 2$, $0 < \theta < 1$).  Recall  $f_\lambda(x) = f(\lambda x)$. By scaling we have 
\begin{equation}\label{eq:scaling}
\frac{1}{|B(x_0, \lambda)| }  \int_0^{\lambda^2} \int_{B(x_0,\lambda)} |e^{t\Delta} f |^2 dy dt   =\frac{\lambda^{n+2-2\theta} }{|B(x_0, \lambda)| }\int_0^1 \int_{B(\lambda^{-1}x_0, 1)} |  e^{t\Delta}  \lambda^\theta f_\lambda  |^2 dy dt.
\end{equation}
The following estimate holds for all $f \in \PM{n-\theta}$, $ (n \geq 2, 0 < \theta \leq 1)$:  
\begin{equation}\nonumber
\Big( \int_{B(x_0,1)}  |e^{t \Delta } f |^2 dy \Big)^{\textstyle \frac{1}{2}} \leq     \|f^{(2)} \|_{L^2} + |B(x_0, 1) |^{\textstyle \frac{1}{2}} \| f^{(\infty)} \|_{L^\infty}   \leq C \| f \|_{\PM{n-\theta}} \| h^\star  \|_{L^1 + L^2}. 
\end{equation}
Here $h^\star = |\xi|^{-(n-\theta)} \in L^1 + L^2$ and $f = f^{(2)}+ f^{(\infty)} \in L^2 + L^\infty$.
Applying this estimate to the right-hand side of \eqref{eq:scaling} along with the scaling relation $\| \lambda^\theta f_\lambda \|_{\PM{n-\theta}} = \| f \|_{\PM{n-\theta}}$  gives 
\begin{align}
\nonumber
\| f \|_{BMO_T^{-1}} &\leq C_\theta T^{(1-\theta)/2 } \| f \|_{\PM{n-\theta}},   \quad (0 < \theta  <  1), \\
\nonumber
\| f \|_{BMO^{-1}} &\leq C_\theta \| f \|_{\PM{n-1}}, \quad (\theta = 1),
\end{align}
where $C_\theta = C \| h^\star  \|_{L^1 + L^2}$ depends on $\theta$.   
Now suppose $h \in \mathcal H^\theta_{\alpha,\omega}(\Rn)$, $\theta < n/2$, $n \geq 2$ and $0 < \theta \leq 1$.   
Theorems \ref{th:alpha}  and \ref{th:omega} imply   $\sup_{\xi \in \Rn}|\xi|^{n-\theta}h(\xi) < \infty$ and  
then $\Fh \hookrightarrow \PM{n-\theta}$ follows.      
\end{proof}
\begin{remark} \label{rem:0} Theorems  \ref{th:alpha}  and \ref{th:omega} also imply that if $h \in \mathcal H^{0}_{\alpha, \omega}(\Rn)$, then there exists a constant $\sigma$  (depending on $h$) with  $0 < \sigma < 1$, such that $\Fh \hookrightarrow \PM{n-\sigma}$.   \hspace{\fill} {\tiny $\blacksquare$}
\end{remark} 

The next theorem deals with embeddings $\Fh \hookrightarrow \overline{VMO}{}^{-1}$, where the latter space plays a role in the local solutions analyzed in \cite{Koch:Tataru:01}.      We set $\log_+ x = \max\{ \log x,  0\}$.  We write $\ind[\cdot]$ for the indictor function and $\ift$ for the inverse Fourier transform.  

\begin{theorem}
\label{th:emb2}
Suppose $h \in \mathcal H^\theta_{\alpha, \omega}(\Rn)$, $n\geq 2$.  If $0  <  \theta < 1$ then there exists a constant $C_\theta$ (which may depend on $\theta$) such that for all $f \in \Fh$, $ T > 0$, 
\begin{equation}
\label{eq:est}
\| f \|_{BMO^{-1}_T} \leq C_\theta T^{(1-\theta)/2} \| f \|_{\Fh},
\end{equation}hence $\Fh \hookrightarrow  {\overline{VMO}}{}^{-1}$.
If $\theta = 0$ then there exist a constant $C$  such that 
\begin{equation}\label{eq:theta.zero.est}
\| f \|_{BMO^{-1}_T} \leq 
\begin{cases}
C T^{1/2} \| f \|_{\Fh} & \text{ if } \omega > n,  \\[.6ex]
C T^{1/2} ( 1 + \log_+ T^{-1/2})^{1/2} & \text{ if } \omega = n
\end{cases} 
\end{equation} for all $f \in \Fh$, $ T > 0$, hence $\Fh \hookrightarrow  {\overline{VMO}}{}^{-1}$. 
\end{theorem}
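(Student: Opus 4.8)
The plan is to bound the $BMO_T^{-1}$-norm by a single time integral of an $L^\infty$-norm and then to control that norm through the majorizing kernel.  For any ball $B(x_0,R)$ one has $\int_{B(x_0,R)}|e^{t\Delta}f(y)|^2\,dy\le|B(x_0,R)|\,\|e^{t\Delta}f\|_{L^\infty(\Rn)}^2$; dividing by $|B(x_0,R)|$, integrating in $t$, and taking the supremum over $x_0$ and over $0<R^2<T$ gives
\[
\|f\|_{BMO_T^{-1}}^2\le\int_0^T\|e^{t\Delta}f\|_{L^\infty(\Rn)}^2\,dt .
\]
Since $f\in\Fh$ gives $|\hat f(\xi)|\le\|f\|_{\Fh}\,h(\xi)$ pointwise and $e^{t\Delta}f=\ift\!\big(e^{-t|\xi|^2}\hat f\big)$, the elementary bound $\|\ift g\|_{L^\infty}\le c_n\|g\|_{L^1}$ yields $\|e^{t\Delta}f\|_{L^\infty}\le c_n\|f\|_{\Fh}\,\Phi(t)$, where $\Phi(t):=\int_{\Rn}e^{-t|\xi|^2}h(\xi)\,d\xi$; this is finite for each $t>0$ because $h=O(|\xi|^{-\alpha})$ near the origin with $\alpha<n$ (Theorem \ref{th:alpha}) and $h$ is bounded on $\{|\xi|\ge1\}$ (Definition \ref{def:r.alg}).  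Thus everything reduces to estimating $\int_0^T\Phi(t)^2\,dt$.

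Next I would estimate $\Phi$ by splitting the $\xi$-integral over $\{|\xi|\le1\}$ and $\{|\xi|\ge1\}$, using $h(\xi)\le C|\xi|^{-\alpha}$ on the first set and $h(\xi)\le C|\xi|^{-\omega}$ on the second (absorbing the bound for $h$ away from $0$ into the constants).  By rescaling, the origin piece satisfies $\int_{|\xi|\le1}e^{-t|\xi|^2}|\xi|^{-\alpha}\,d\xi\le A$ for all $t>0$ and $\le A'\,t^{-(n-\alpha)/2}$ for $t\ge1$ (both finite since $\alpha<n$).  For the high-frequency piece, the inequality $t|\xi|^2\ge\tfrac12 t+\tfrac12|\xi|^2$ on $\{|\xi|\ge1,\ t\ge1\}$ gives $\int_{|\xi|\ge1}e^{-t|\xi|^2}|\xi|^{-\omega}\,d\xi\le B'e^{-t/2}$ for $t\ge1$, while for $0<t\le1$ a rescaling gives $\int_{|\xi|\ge1}e^{-t|\xi|^2}|\xi|^{-\omega}\,d\xi\lesssim t^{-(n-\omega)/2}$ if $n-\theta\le\omega<n$, $\lesssim 1+\log_+(1/t)$ if $\omega=n$, and is bounded if $\omega>n$.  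Invoking $\omega\ge n-\theta$ (Theorem \ref{th:omega}) and, when $0<\theta<1$, the elementary estimate $1+\log_+(1/t)\le C_\theta t^{-\theta/2}$ on $(0,1]$, I obtain $\Phi(t)\le C_\theta t^{-\theta/2}$ for $0<t\le1$ and $\Phi(t)\le Ct^{-(n-\alpha)/2}$ for $t\ge1$; when $\theta=0$ and $\omega>n$ one simply has $\Phi(t)\le\|h\|_{L^1}$; and in the borderline case $\theta=0$, $\omega=n$ one keeps $\Phi(t)\lesssim 1+\log_+(1/t)$ for $0<t\le1$ with $\Phi$ bounded (indeed exponentially decaying) for $t\ge1$.

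Finally I would integrate.  For $0<\theta<1$ and $T\le1$, $\int_0^T\Phi(t)^2\,dt\le C_\theta\int_0^T t^{-\theta}\,dt\le\tfrac{C_\theta}{1-\theta}T^{1-\theta}$; for $T>1$, $\int_0^T\Phi^2=\int_0^1\Phi^2+\int_1^T\Phi^2\le C_\theta+C\int_1^T t^{-(n-\alpha)}\,dt\le C_\theta'T^{1-\theta}$, the last inequality using that $\alpha\le n-\theta$ (Theorem \ref{th:alpha}) forces $n-\alpha\ge\theta$.  Either way $\int_0^T\Phi^2\le C_\theta'T^{1-\theta}$, which yields \eqref{eq:est}.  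For $\theta=0$, $\omega>n$ one gets $\int_0^T\Phi^2\le\|h\|_{L^1}^2\,T$; and for $\theta=0$, $\omega=n$ the identity $\int_0^u(1+\log_+(1/t))^2\,dt\lesssim u\,(1+\log_+u^{-1/2})^2$ for $u\le1$, together with boundedness of $\Phi$ beyond $t=1$, gives $\int_0^T\Phi^2\lesssim T\,(1+\log_+T^{-1/2})^2$, which yields \eqref{eq:theta.zero.est}.  Evaluating these bounds at $T=1$ shows $f\in BMO_1^{-1}$ with $\|f\|_{BMO_1^{-1}}\lesssim\|f\|_{\Fh}$, and letting $T\to0$ shows $\|f\|_{BMO_T^{-1}}\to0$; hence $f\in\overline{VMO}{}^{-1}$ and the continuous embedding $\Fh\hookrightarrow\overline{VMO}{}^{-1}$ follows in each case.

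The step I expect to be the main obstacle is the bookkeeping in the borderline case $\theta=0$, $\omega=n$: here $h$ just fails to be integrable at infinity, so $\Phi$ acquires a logarithm, and one must track that it enters the final bound only through the factor $1+\log_+T^{-1/2}$.  More generally, keeping the estimates uniform for all $T>0$ (rather than only small $T$) is where the constraint $\alpha\le n-\theta$ of Theorem \ref{th:alpha}---which governs the slow, low-frequency decay of $\Phi(t)$ as $t\to\infty$---is essential, complementing the role of $\omega\ge n-\theta$ from Theorem \ref{th:omega} in the small-$T$ regime.
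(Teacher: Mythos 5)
Your proposal is correct, and it reaches the stated conclusion by a genuinely different (and more elementary) route than the paper. The paper proves Theorem~\ref{th:emb2} by recycling the scaling identity \eqref{eq:scaling} from the proof of Theorem~\ref{th:emb1} together with the $L^2+L^\infty$ Fourier decomposition $f_\lambda=f_\lambda^{(2)}+f_\lambda^{(\infty)}$; the $T$-dependence is then extracted from the scaling factor $\lambda^{n+2-2\theta}/|B(x_0,\lambda)|$. You instead discard the averaging over balls entirely, bounding the Carleson quantity crudely by $\int_0^T\|e^{t\Delta}f\|_{L^\infty}^2\,dt$ and then by $c_n^2\|f\|_{\Fh}^2\int_0^T\Phi(t)^2\,dt$ with $\Phi(t)=\int_{\Rn}e^{-t|\xi|^2}h(\xi)\,d\xi$, so that all of the $T$-dependence comes from the time integral. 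This is simpler, and your tracking of where $\omega\ge n-\theta$ (small-$t$ regime) versus $\alpha\le n-\theta$ (large-$t$, large-$T$ regime) enter is exactly right. The trade-off worth noting: the $L^\infty$ reduction makes $\int_0^T\Phi^2$ behave like $\int_0^T t^{-\theta}\,dt$, which diverges when $\theta=1$; so your method is available here precisely because the theorem restricts to $\theta<1$, whereas the paper's scaling/splitting argument is the one that also covers the $\theta=1$ case in Theorem~\ref{th:emb1}. Finally, a small bookkeeping note: in the borderline case $\theta=0$, $\omega=n$ your computation yields $\|f\|_{BMO_T^{-1}}\lesssim T^{1/2}(1+\log_+T^{-1/2})\,\|f\|_{\Fh}$, i.e.\ a first power of the logarithm. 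The paper's displayed bound shows $(1+\log_+T^{-1/2})^{1/2}$ (and omits the $\|f\|_{\Fh}$ factor), but working through the paper's own estimate---where the square of the norm picks up $(1+\log\lambda^{-1})^2$ from the $L^\infty$ piece---one also lands on the first power, so your result agrees with what the paper's argument actually delivers; it does not affect the conclusion $\Fh\hookrightarrow\overline{VMO}{}^{-1}$ in any case.
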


\begin{proof}  
The proof of Theorem \ref{th:emb1} implies \eqref{eq:est} for $0 < \theta < 1$.
If $\theta = 0$ and  $\omega > n$ then $h \in L^1$.   Then for all $f \in \Fh$, $\lambda > 0$,  we have
$$
\Big( \int_{B(\lambda^{-1} x_0,1)}  | e^{t \Delta } f_\lambda |^2 dy \Big)^{\textstyle \frac{1}{2}} 
\leq    |B(\lambda^{-1} x_0, 1) |^{\textstyle \frac{1}{2}} \| f_\lambda  \|_{L^\infty} 
\leq C \| f \|_{L^\infty}  \leq C \| f \|_{\Fh} \| h  \|_{L^1}. 
$$
Applying this to \eqref{eq:scaling} gives the first part of \eqref{eq:theta.zero.est}. 
If $\theta = 0$ and  $\omega = n$ we let  
$f_\lambda = f_\lambda^{ (2) } + f_{\lambda}^{(\infty)} \in L^2 + L^\infty$ where 
$$
f_\lambda^{(2)} = f_\lambda^{(2, R)}  
 = \ift (\, \widehat{f_\lambda}\,  \ind{[\, |\xi| \geq R \, ]  }\, ), 
 \quad f_\lambda^{(\infty)} =f_\lambda^{(\infty, R)} 
  = \ift (\,\widehat{f_\lambda}\,  \ind{ [ \, |\xi| < R \, ] } \, ),
$$ 
and  $R $ is chosen  so that $C_1 |\xi|^{-n} \ind_{[ \, |\xi | \geq R \, ]} \leq h(\xi) \ind_{[ \, |\xi | \geq R \, ]} \leq C_2 |\xi|^{-n} \ind_{[ \,  |\xi | \geq R \, ]}$. Then if $\lambda < 1$ we have 
\begin{align}\nonumber 
\Big( \int_{B(\lambda^{-1} x_0,1)} \! | e^{t \Delta } f_\lambda^{(2)} |^2 dy \Big)^{\textstyle \frac{1}{2}} \leq{}& 
C \sup_{|\xi| > R} \frac{\lambda^{-n} \hat f (\lambda^{-1} \xi)}{\lambda^{-n} h(\lambda^{-1} \xi)}
\sup_{| \xi | > R} \frac{\lambda^{-n} h (\lambda^{-1} \xi)}{ h( \xi)} \| h \ind_{[ \, |\xi| \geq  R \, ]} \|_{L^2} \\
\nonumber
& \leq C \frac{C_2}{C_1}  \| f \|_{\Fh} \| h \ind_{[\, |\xi| >  R\, ]}  \|_{L^2},  \quad (\lambda < 1),  
\end{align}
and
\begin{align} \nonumber 
\Big( \int_{B(\lambda^{-1} x_0,1)}  | e^{t \Delta } f_\lambda^{(\infty)} |^2 dy \Big)^{\textstyle \frac{1}{2}}   \leq{}&  | B(\lambda^{-1} x_0 , 1 ) |^{\textstyle \frac{1}{2}} \| f_{\lambda}^{(\infty)}   \|_{L^\infty} \\ 
\nonumber 
&\leq 
C \| f \|_{\Fh} \int_\Rn h(\xi) \ind{[\, \lambda |\xi| <  R\, ]} d\xi \\
\nonumber
& \leq C( 1 + \log \lambda^{-1} ) \|f \|_\Fh  \quad (\lambda < 1).
\end{align}
Applying these estimates to \eqref{eq:scaling} gives the second part of \eqref{eq:theta.zero.est} provided $T \leq 1$,  which is sufficient to conclude  $\Fh \hookrightarrow \overline{VMO}{}^{-1}$.    
In addition, for $T > 1$, we have 
$$ \| f \|_{BMO^{-1}_T} \leq C T^{(1-\sigma)/2} \|   f   \|_{\PM{n-\sigma}} \leq C T^{(1-\sigma)/2} \|   f   \|_{\Fh}  \leq C T^{1/2} \|   f   \|_{\Fh}, \quad (T > 1),$$
on account of the  embedding $\Fh \hookrightarrow \PM{n-\sigma}$ as noted in Remark \ref{rem:0}.
 \end{proof} 
 
\subsection{Further properties} 
Theorems \ref{th:emb1}  and \ref{th:emb2} use the fact that  if $h \in \mathcal H_{\alpha,\omega}^
\theta(\Rn)$ then $h \in L^1 + L^2$.   Not all majorizing kernels share this property.  Proposition \ref
{prop:products} provides a class of counterexamples, making use of the following criterion for $L^q + 
L^r$:  a measurable function $f$ defined on $\Rn$ belongs to $L^q + L^r$, $1 \leq q < r \leq 
\infty$ if and only if for all $M > 0$, $f \boldsymbol 1_{\boldsymbol [ |f| \geq M ]} \in L^q$ and  $f 
\ind_{[|f| \leq M]} \in L^r$.
  \begin{prop}  \label{prop:products}
 Let $n \geq 2$, $k \geq 2$ and $\vartheta <  n/2$ and partition the coordinate index set of $\Rn$ into $k$ blocks:  $\{ 1, \dots , n\} = I_1 \cup \dots \cup I_k$, $|I_i| = d_i$, $\sum d_i = n$.   Define  $ h (\xi)$ on $\Rn$  by 
\begin{equation}
\nonumber
h(\xi) = \prod_{i = 1}^k r_i^{-(d_i - \theta_i)},  \quad r_i^2 = \sum_{j \in I_i}  \xi_j^2,  \quad  \xi = (\xi_1 \dots \xi_n), 
\end{equation} 
where $\sum_{i = 1}^k \theta_i =\vartheta$, $0 < \theta_i < d_i/2$.    Then $h \in \mathcal H^\vartheta(\Rn)$ and  $h \notin L^1 + L^2$.
 \end{prop}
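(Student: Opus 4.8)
The plan is to exploit the tensor-product structure of $h$. Writing $\Rn = \R{d_1} \times \cdots \times \R{d_k}$ and $\xi = (\xi^{(1)}, \dots, \xi^{(k)})$ with $\xi^{(i)} \in \R{d_i}$, so that $r_i = |\xi^{(i)}|$ and $h(\xi) = \prod_{i=1}^k |\xi^{(i)}|^{-a_i}$ with $a_i := d_i - \theta_i$, the convolution $h*h$ factors over the blocks, each factor being a classical Riesz composition on $\R{d_i}$. The two points that make everything work are that $0 < \theta_i < d_i/2$ is precisely the admissibility window for those compositions, and that the non-membership $h \notin L^1 + L^2$ comes from the super-level sets $\{h \geq M\}$ being \emph{unbounded}.

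For $h \in \mathcal H^\vartheta(\Rn)$, Tonelli gives
\[
 h*h(\xi) = \prod_{i=1}^k \int_{\R{d_i}} |\xi^{(i)} - \eta|^{-a_i}\, |\eta|^{-a_i}\, d\eta .
\]
Now $0 < \theta_i < d_i/2$ is equivalent to $d_i/2 < a_i < d_i$, which is exactly the range in which $|\,\cdot\,|^{-a_i} * |\,\cdot\,|^{-a_i} = C(d_i,a_i)\, |\,\cdot\,|^{-(2a_i - d_i)}$ holds on $\R{d_i}$ (both exponents below $d_i$, their sum exceeding $d_i$). Since $2a_i - d_i = d_i - 2\theta_i$, setting $B_0 := \prod_i C(d_i,a_i)$ yields
\[
 h*h(\xi) = B_0 \prod_{i=1}^k |\xi^{(i)}|^{-(d_i - 2\theta_i)} = B_0\, h(\xi) \prod_{i=1}^k r_i^{\,\theta_i} \leq B_0\, |\xi|^{\vartheta}\, h(\xi),
\]
the last inequality because $r_i \leq |\xi|$ and $\theta_i > 0$ give $\prod_i r_i^{\theta_i} \leq |\xi|^{\sum_i \theta_i} = |\xi|^{\vartheta}$. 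Also $h^{-1}(\infty) = \bigcup_{i=1}^k \{\xi^{(i)} = 0\}$ is a finite union of proper coordinate subspaces, hence Lebesgue-null, and $h$ is a tempered function (locally integrable since each $a_i < d_i$, with at most polynomial growth). So $h \in \mathcal H^\vartheta(\Rn)$; note $\vartheta = \sum_i \theta_i < \sum_i d_i/2 = n/2$, consistent with Theorem \ref{th:dim.constraint}.

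For $h \notin L^1 + L^2$ I would invoke the $L^q+L^r$ criterion stated above with $(q,r)=(1,2)$: it suffices to show $h\,\ind_{[h \geq M]} \notin L^1$ for some $M > 0$ — I shall get it for every $M > 0$. Passing to polar coordinates in each block, $d\xi = \prod_i (d\sigma_i\, r_i^{d_i-1}\,dr_i)$, and since $h$ and $\{h \geq M\}$ depend only on $(r_1,\dots,r_k)$, the identity $d_i - 1 - a_i = \theta_i - 1$ gives
\[
 \int_{\{h \geq M\}} h\, d\xi = C_0 \int_{\{\prod_i r_i^{-a_i} \geq M\}} \prod_{i=1}^k r_i^{\,\theta_i - 1}\, dr
\]
for a positive constant $C_0$. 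For fixed $r_2,\dots,r_k$ the constraint is $0 < r_1 \leq \big( M \prod_{i\geq 2} r_i^{a_i}\big)^{-1/a_1}$; performing this $r_1$-integral (finite at $r_1 = 0$ because $\theta_1 > 0$) and then using Tonelli reduces the right-hand side to a positive constant times $M^{-\theta_1/a_1}\prod_{i=2}^k \int_0^\infty r_i^{\,\theta_i - 1 - a_i\theta_1/a_1}\, dr_i$. Each one-dimensional integral $\int_0^\infty r^{s}\, dr$ equals $+\infty$ — it diverges at $0$ if $s \geq -1$ and at $\infty$ if $s \leq -1$, hence for \emph{every} real $s$ — and $k \geq 2$ guarantees at least one such factor. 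Thus $\int_{\{h \geq M\}} h\, d\xi = \infty$ for all $M > 0$, so $h \notin L^1 + L^2$. (A symmetric computation shows $h\,\ind_{[h \leq M]} \notin L^2$ for every $M$ as well.)

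The one genuinely delicate point is this last computation: one must recognize that $\{h \geq M\}$ is unbounded — a small value of one $r_i$ can be compensated by large values of the other blocks — and organize the iterated integral so that divergence is visible no matter the sign pattern of the residual exponents $s_i = \theta_i - 1 - a_i\theta_1/a_1$. Everything else is bookkeeping: the factorization of $h*h$, the observation that $0 < \theta_i < d_i/2$ is the Riesz-admissibility window in each block, and the routine check that $h$ is a tempered kernel whose infinity-set is null.
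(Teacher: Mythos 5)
Your proof is correct and follows essentially the same route as the paper: factor $h$ over the coordinate blocks to get the convolution inequality (the paper phrases this as $h_i \in \mathcal H^{\theta_i}(\R{d_i})$ rather than writing out the Riesz composition formula, but that is the same fact), then use the $L^1 + L^2$ criterion and the block-polar change of variables to show $\int_{\{h \geq 1\}} h\,d\xi = \infty$. The one place you go further than the paper is the divergence of the reduced integral, which the paper simply asserts; your iterated-integral argument, integrating out $r_1$ first and observing that each remaining $\int_0^\infty r^s\,dr$ diverges for every real $s$, is a clean way to make that step explicit.
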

 \begin{proof}
For $h$ as defined above and any set $A \subset \R+$, we have
\begin{equation} \nonumber
\int_\Rn h(\xi) \ind{[h \in A]} d\xi 
  = C \int\limits_0^\infty \cdots \int\limits_0^\infty 
  \Big( \prod_{i = 1}^k r_i^{\theta_i-1}\Big)  
  \ind{\big[\textstyle \prod_{i = 1}^k r_i^{-(d_i - \theta_i)} \in A  \big]} dr_1 \cdots dr_k 
\end{equation}
where $C$ is a constant depending only on $(d_1,\dots ,d_k)$. In particular,
 \begin{equation}
 \int_\Rn h(\xi) \ind{[h \geq 1]} d\xi =  C \int\limits_0^\infty \cdots \int\limits_0^\infty \Big( \prod_{i = 1}^k r_i^{\theta_i-1} \Big) \ind{\big[ \textstyle \prod_{i = 1}^k r_i^{d_i - \theta_i} \leq 1 \big]}  dr_1 \cdots dr_k = \infty
\end{equation}
   so $h \notin L^1 + L^2$.  On the other hand $h$, defined on $\Rn \simeq \R{d_1} \times \cdots \times  \R{d_k}$, has the form $h = \prod_i^k h_i$ with each $h_i \in \mathcal H^{\theta_i}(\R{d_i})$ so 
 \begin{equation}\nonumber
 h * h (\xi ) \leq B_1 \cdots B_k | r_1 |^{\theta_1} \ldots  | r_k |^{\theta_k} h(\xi) \leq B_1 \cdots B_k |\xi|^\vartheta h(\xi) 
 \end{equation} and  $h \in \mathcal H^\vartheta(\Rn)$.  
\end{proof} 
The next proposition shows how $ \Fh \hookrightarrow B^{-\theta +\frac{n}{p}, \infty}_p$, $\Fh \not\subset \PM{n-\theta}$  can occur.  
Following \cite[Lemma 7.1]{Cannone:Karch:04}, we use the heat semigroup characterization of the 
spaces $\dot B^{-\alpha, \infty}_p$, $\alpha > 0$ via the norm $\| f \|_{\dot B^{-\alpha, \infty}_p} = 
\sup_{t \geq 0} t^{\alpha/2} \| e^{t \Delta  } f \|_{L^p}$,  (see \cite[p.\ 72]{Bahouri:Chemin:Danchin:11}).
\begin{prop}
\label{prop:besov}
Let $n \geq 2$, $\vartheta < n/2$, and suppose $h \in \mathcal H^\vartheta(\Rn)$ is  defined as in Proposition \ref{prop:products}.  Then for all $p > n / \vartheta$, we have $\Fh \hookrightarrow \dot B^{-\vartheta + (n/p), \infty}_p$, but for $a \geq 0$, $\Fh \not\subset \PM{a}$.     
\end{prop}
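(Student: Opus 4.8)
The plan is to treat the two assertions separately, since they rely on quite different features of $h$.

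\emph{The non-inclusion.} To see that $\Fh\not\subset\PM{a}$ for every $a\ge0$, I would exhibit a single element of $\Fh$ that escapes all of these spaces at once, namely the ``extremal'' distribution: fix a unit vector $\mathbf{e}\in\mathbf{C}^n$ and let $f\in\mathcal S^\prime(\Rn)^n$ be defined by $\hat f=h\,\mathbf{e}$. This is legitimate because, by Proposition \ref{prop:products}, $h$ is a tempered function in $L^1_{\mathrm{loc}}(\Rn)$; and $\|f;\Fh\|=\sup_\xi[h(\xi)]^{-1}|\hat f(\xi)|=1$, so $f\in\Fh$. On the other hand $h$ is exactly homogeneous of degree $-(n-\vartheta)<0$, and on every sphere $\{|\xi|=\rho\}$ it is unbounded: as $\xi$ approaches a coordinate-block subspace $\{r_i=0\}$ the factor $r_i^{-(d_i-\theta_i)}$ blows up while the other factors stay bounded. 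Hence for any $a\ge0$ and any $M>0$ the set $\{1<|\xi|<2:\ |\xi|^a h(\xi)>M\}$ has positive Lebesgue measure, so $\|f;\PM{a}\|=\esssup_\xi|\xi|^a|\hat f(\xi)|=\infty$ and $f\notin\PM{a}$.

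\emph{The embedding, first reduction.} For $\Fh\hookrightarrow\dot B^{-\vartheta+n/p,\infty}_p$ I would use the heat-semigroup norm recalled just before the proposition, $\|g\|_{\dot B^{-\alpha,\infty}_p}=\sup_{t>0}t^{\alpha/2}\|e^{t\Delta}g\|_{L^p}$, with $\alpha=\vartheta-n/p>0$ (equivalent to the hypothesis $p>n/\vartheta$). Because $h$ is exactly homogeneous, the dilation $f\mapsto\lambda^{\vartheta}f_\lambda$ is an isometry of $\Fh$ (this is \eqref{eq:scaling2} with $C_\lambda=C_\lambda^\prime=1$), and a change of variables in the Gaussian convolution gives
\[
\|e^{t\Delta}f\|_{L^p}=t^{-\alpha/2}\,\big\|e^{\Delta}\!\big(t^{\vartheta/2}f_{\sqrt t}\big)\big\|_{L^p}.
\]
Thus it suffices to prove the single estimate $\|e^{\Delta}f\|_{L^p}\le C\,\|f;\Fh\|$ for all $f\in\Fh$, with $C$ independent of $f$; feeding it back into the identity yields $\|f\|_{\dot B^{-\vartheta+n/p,\infty}_p}\le C\|f;\Fh\|$ and hence the continuous embedding.

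\emph{The embedding, the key estimate, and the main obstacle.} Since $\vartheta<n/2$ forces $p>n/\vartheta>2$, Hausdorff--Young applies: with $1/p+1/p'=1$ and $|\hat f|\le\|f;\Fh\|\,h$,
\[
\|e^{\Delta}f\|_{L^p}=\big\|\ift\!\big(e^{-|\xi|^2}\hat f\big)\big\|_{L^p}\le C\big\|e^{-|\xi|^2}\hat f\big\|_{L^{p'}}\le C\,\|f;\Fh\|\,\big\|e^{-|\xi|^2}h\big\|_{L^{p'}}.
\]
Everything now hinges on the single finiteness statement $\|e^{-|\xi|^2}h\|_{L^{p'}}<\infty$, and this is exactly where the product structure of $h$ (not merely its homogeneity) enters and where the restriction on $p$ is used: by Fubini,
\[
\big\|e^{-|\xi|^2}h\big\|_{L^{p'}}^{p'}=\prod_{i=1}^{k}\int_{\R{d_i}}e^{-p'|y|^2}\,|y|^{-(d_i-\theta_i)p'}\,dy,
\]
and the $i$-th factor is finite precisely when $(d_i-\theta_i)p'<d_i$; combining these $k$ block conditions (the Gaussian makes the behaviour at infinity automatic) determines the admissible range of $p$. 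I expect this integrability bookkeeping — translating the block conditions $(d_i-\theta_i)p'<d_i$ into a single condition on $p$ and recording that it is compatible with $p>n/\vartheta$ — to be the only non-routine point; the reduction to $t=1$ by scaling and the invocation of Hausdorff--Young are standard.
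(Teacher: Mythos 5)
Your approach matches the paper's in all essentials. For the non-inclusion, the paper's one-line justification (that $h$ blows up away from the origin) is exactly what you make explicit by observing $h$ is unbounded on every sphere near the coordinate subspaces, so taking $\hat f = h\,\mathbf{e}$ gives an element of $\Fh$ outside every $\PM{a}$. For the embedding, the paper likewise applies Hausdorff--Young with the conjugate exponent $q=p'$ and uses the product structure to factor the resulting integral into $k$ radial Gamma-type integrals; your reduction to $t=1$ by scaling is a cosmetic variant of the paper's carrying $t$ through and reading off the power at the end.

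The step you defer as ``integrability bookkeeping'' is, however, not routine, and it is where both your proposal and the paper's proof are incomplete. Your block conditions $(d_i-\theta_i)p'<d_i$ are equivalent to $p>d_i/\theta_i$, so the estimate actually requires $p>\max_i d_i/\theta_i$. By the mediant inequality $\max_i d_i/\theta_i \ge \bigl(\sum_i d_i\bigr)/\bigl(\sum_i \theta_i\bigr)=n/\vartheta$, with equality only when all the ratios $d_i/\theta_i$ coincide; thus $p>n/\vartheta$ does not in general imply your block conditions. Concretely, take $n=k=2$, $d_1=d_2=1$, $\theta_1=1/10$, $\theta_2=2/5$, $\vartheta=1/2$, $p=5$: then $p>n/\vartheta=4$, but $p'=5/4$ gives $(d_1-\theta_1)p'=9/8>1=d_1$, and the first block integral diverges at the origin. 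So ``recording that it is compatible with $p>n/\vartheta$'' is not a formality: the computation shows the admissible range should be $p>\max_i d_i/\theta_i$, and the paper's own proof (which only records $q<n/(n-\vartheta)$ before asserting convergence) glosses over the same point.
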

\begin{proof} First $\Fh \not\subset \PM a$, $a \geq 0$ since $\Omega_h \neq \{0\}$.    
Let $n / \vartheta < p < \infty$ and set $q^{-1} + p^{-1} = 1$ so that $1 < q < n/(n-\vartheta)$.  By the Hausdorff-Young inequality, for all $f \in \Fh$, $t > 0$ we have 
\begin{align} \nonumber
\| e^{t \Delta} f \|_{L^p}^q \leq{}& C \int_\Rn | e^{-t |\xi|^2} \hat f(\xi) |^q d\xi \\
& \leq C \Big( \sup_{\xi \in \Rn } \frac{ |\hat f(\xi) |}{h (\xi)} \Big)^q \prod_{i=1}^k \Big( \int_0^\infty r^{d_i - (d_i - \theta_i)q} e^{-q r^2 t} \frac{dr}{r} \Big) \\ \nonumber
& \leq C \| f \|_{\Fh}^q \prod_{i = 1}^k t^{-(d_i - (d_i -\theta_i) q) / 2} = C \| f \|_{\Fh}^q t^{(-n + (n-\vartheta)q)/2}.  
  \end{align}
  Then $\sup_{t > 0} t^{(\vartheta - n/p)/2} \| e^{t \Delta} f \|_{L^p} \leq C \| f \|_\Fh$. 
\end{proof}
\section{acknowledgement}
This work grew out of the collaborative research effort of the authors of \cite{FRG:03} funded by the U.S.\ National Science Foundation.    We are grateful to the anonymous referees for carefully
reading a previous version of the paper  and making suggestions that resulted in significant improvements.

\end{document}